\newcommand{\Pp}{\mathbb{P}}
\newcommand{\Zz}{\mathbb{Z}}
\newcommand{\Qq}{\mathbb{Q}} 
\newcommand{\Ff}{\mathbb{F}}
\theoremstyle{plain}
\newtheorem{theorem}{Theorem}[section]    
\newtheorem{twisting lemma}[theorem]{Twisting lemma}
\newtheorem{lemma}[theorem]{Lemma}       
\newtheorem{corollary}[theorem]{Corollary}   
\theoremstyle{remark}
\newtheorem{remark}[theorem]{Remark}   
\def\sep{{\scriptsize\hbox{\rm sep}}}
\def\Gabs{\hbox{\rm G}}
\def\Gal{\hbox{\rm Gal}}
\def\cm{\hbox{\hbox{\rm C}\kern-5pt{\raise 1pt\hbox{$|$}}}}
\def\lhfl#1#2{\smash{\mathop{\hbox to 12mm{\leftarrowfill}}
\limits^{#1}_{#2}}}
\def\rhfl#1#2{\smash{\mathop{\hbox to 12mm{\rightarrowfill}}
\limits^{#1}_{#2}}}
\def\build#1_#2^#3{\mathrel{
\mathop{\kern 0pt#1}\limits_{#2}^{#3}}}
\def\htrait#1#2{\smash{\mathop{\hbox to 12mm{\hrulefill}}
\limits^{#1}_{#2}}}
\def\sxbullet{{\raise 2pt\hbox{\bf .}}}
\begin{document}

\title[Twisted covers and specializations]{Twisted covers and specializations}

\author{Pierre D\`ebes}

\author{Fran\c cois Legrand}

\email{Pierre.Debes@math.univ-lille1.fr}

\email{Francois.Legrand@math.univ-lille1.fr}

\address{Laboratoire Paul Painlev\'e, Math\'ematiques, Universit\'e Lille 1, 59655 Villeneuve d'Ascq Cedex, France}

\subjclass[2000]{Primary 11R58, 12E30, 12E25, 14G05, 14H30; Secondary 12Fxx, 14Gxx, 14H10}

\keywords{Specialization, algebraic covers, twisting lemma, Hilbert's ir\-re\-du\-ci\-bility theorem, 
PAC fields, finite fields, local fields, global fields}

\date{\today}

\begin{abstract} 
The central topic is this question: is a given $k$-\'etale algebra $\prod_lE_l/k$ the specialization 
of a given $k$-cover $f:X\rightarrow B$ at some point $t_0\in B(k)$? Our main tool is a {\it twisting lemma} that reduces the problem to finding $k$-rational points on a certain $k$-variety. Previous forms of this twisting lemma are generalized and unified. New applications are given: a Grunwald form of Hilbert's irreducibility theorem over number fields, a non-Galois variant of the Tchebotarev theorem for function fields over finite fields, some general specialization properties of covers over PAC or ample fields. 
\end{abstract}

\maketitle



\section{Presentation}\label{sec:introduction}

\subsection{The central question} If $f:X\rightarrow B$ is an algebraic cover defined over a field $k$ and 
$t_0$ a $k$-rational point on $B$, not in the branch locus of $f$, the specialization of $f$ at $t_0$ is defined as a finite $k$-\'etale algebra of degree $n=\deg(f)$. For example, if $B=\Pp^1$ and $f$ is given by some
polynomial $P(T,Y)\in k[T,Y]$, it is the product of separable field extensions of $k$ that correspond to the 
irreducible factors of $P(t_0,Y)$ (for all but finitely many $t_0\in k$). Our central question is whether
a given degree $n$ $k$-\'etale algebra $\prod_{l} E_l/k$  is the specialization of some given degree $n$ $k$-cover $f:X\rightarrow B$ at some unramified point $t_0\in B(k)$. The classical Hilbert specialization 
property corresponds to the special case for which \'etale algebras are taken to be single degree $n$ field extensions and the answer is positive for at least one of them.

The question has already been investigated in \cite{DEGha} and \cite{DEGha2} for regular 
Galois covers and in \cite{DeLe1} for covers with geometric monodromy group $S_n$ (definitions recalled in \S \ref{ssec:basic2}).
The aim of this paper is to handle the situation of arbitrary covers, to provide a unifying 
approach and to give further 
applications. 


\subsection{The twisting lemma} Our main tool is a {\it twisting lemma} that gives a general answer to the question: under certain hypotheses, {\it answer is {\rm Yes} if there exist unramified $k$-rational points on the covering space 
$\widetilde X$ of certain twisted covers $\widetilde f:\widetilde X\rightarrow B$}. This lemma has several variants. The first one, for regular Galois covers,  was established in \cite{DeBB} for covers of $\Pp^1$ and in \cite{DEGha} for a general base space. It is used 
in \cite{DeLe1} to obtain the second one, for  covers with geometric monodromy group $S_n$.
We will prove the two variants shown on top row of the following diagram, which indicates that 
they generalize the two previous ones, shown on bottom row.

$$\begin{matrix}
&&& &\hbox{{\rm Galois}} & \Leftrightarrow & {\rm general}\\
&&& &  \Downarrow & & \Downarrow \\
&&&  &
\hbox{{\rm regular Galois}} &
\Rightarrow & {\rm monodromy}\ S_n \\
\end{matrix}$$

The {\it Galois} variant is for the situation $f:X\rightarrow B$ is a Galois cover, regular or not; it is proved in \S \ref{ssec:Galois_covers}. The {\it general} variant is proved in \S \ref{ssec:twisting-lemma-general-form} and concerns arbitrary covers, Galois or not, regular or not. Implication $\Rightarrow$ in upper 
row means that the general variant will be obtained from the Galois variant. 
We will also be interested in the converse in the twisting lemma: answer to
the original question is Yes {\it if and only if} there exist unramified $k$-rational points on the twisted covers.


The twisting lemma is a geometric {\it avatar} 
of an argument of Tchebotarev known as the {\it Field Crossing Argument} and which notably
appears in the proof of  the Tchebotarev density theorems over global fields and in the theory 
of PAC fields (see \cite{FrJa}).
The twisting lemma formalizes the core of the argument 
and produces a geometric tool: the variety $\widetilde X$.
This allows a unifying approach over an arbitrary base field: 
questions are reduced to finding rational points on $\widetilde X$. Letting 
the base field vary then yields previous results in various contexts and leads to new applications. The 
twisted cover $\widetilde f:\widetilde X\rightarrow B$, which appeared first 
in \cite{DeBB}, could also be defined by using the language of torsors.
Another related approach using an embedding problem presentation has also 
been recently proposed by Bary-Soroker \cite{Bary-Soroker_irreducible}.




\subsection{Applications}
As in previous papers, they are obtained over fields 
with good arithmetic properties: PAC fields, finite fields, number fields, ample fields. 
We present them below in connection with those from previous works.

 \subsubsection{} \hskip -1,5mm {\it Over a PAC field $k$} (definition recalled in \S \ref{ssec:field_PAC}), the regular Galois variant was used in \cite{DeBB} to prove that, given a group $G$ and a subgroup $H\subset G$, any Galois extension $E/k$ of group $H$ is a specialization of any regular Galois $k$-cover $f:X\rightarrow \Pp^1$ of group $G$ (thereby proving the so-called Beckmann-Black conjecture for PAC fields). 
A not necessarily Galois analog with an arbitrary degree $n$ $k$-\'etale algebra $\prod_{l} E_l/k$ replacing $E/k$ is proved in \cite{DeLe1} under the assumption that $f$ is a degree $n$ $k$-cover of geometric monodromy group $S_n$.  Corollary \ref{cor:PAC1} is a refinement of the first result (the regularity assumption is relaxed) while corollary \ref{cor:PAC2} is a variant of the second one (allowing more general monodromy groups). Similar applications have been obtained by Bary-Soroker \cite{Bary-Soroker_irreducible}.

The general spirit of these results is that over a PAC field there is no diophantine obstruction\footnote{in the sense that existence of rational points on some variety, which is a condition of our twisting lemma in general, is automatic over a PAC field $k$.} to a given \'etale algebra being a specialization of some given cover; obstructions only come from Galois theory. This has some impact on the arithmetic of PAC fields. For example a by-product of \cite{DeLe1} is that if $k$ is a PAC field of characteristic $0$
(for simplicity), every degree $n$ extension $E/k$ can be realized by some trinomial $Y^n - Y + b$ 
with $b\in k$.

\subsubsection{} \hskip -1,5mm {\it Over a finite field $k=\Ff_q$}, the twisting lemma can be combined with Lang-Weil to obtain an estimate for the number of points $t_0\in \Ff_q$ at which a given degree $n$ \'etale algebra $\prod_{l} E_l/\Ff_q$ is a specialization of a given degree $n$ $\Ff_q$-cover $f:X\rightarrow \Pp^1$ of geometric monodromy group $S_n$ (corollary \ref{cor:finite-fields}). This type of result is known in the literature as {\it Tchebotarev theorem for function fields over finite fields}. For example, if $\prod_{l} E_l/\Ff_q$ is the single degree $n$ field extension $\Ff_{q^n}/\Ff_q$, the estimate is of the form $q/n + O(\sqrt{q})$.
In the 
specific case where $f$ is given by the trinomial $Y^n+Y-T$, it yields results of Cohen 
and Ree proving a conjecture of Chowla. See \S \ref{ssec:finite_fields} for details and references.

For finite fields $\Ff_q$, the same general spirit as for PAC fields can be retained --- no diophantine obstruction to the problem ---, but provided that $q$ be suitably large.


\subsubsection{} \hskip -1,5mm {\it The local-global situation} of a number field $k$ given with some completions $k_v$ was central in \cite{DEGha}. The main result was a Hilbert-Grunwald theorem showing that every regular Galois $k$-cover $f:X\rightarrow \Pp^1$ of group $G$ has specializations at points $t_0\in k$ that are {\it Galois field extensions of group $G$} (Hilbert) with the extra property (Grunwald) that {\it they induce 
prescribed unramified extensions $E^v/k_v$ of Galois group $H_v\subset G$} at each finite place 
$v$ in a given finite set $S$, the only condition on the places being that the residue fields be suitably big and of order prime to $|G|$. An analog is given in \cite{DeLe1} for not necessarily 
Galois covers: the Hilbert condition becomes that the specialization at $t_0$ is {\it a degree $n$ field extension} and the Grunwald condition that {\it the local degrees} are imposed at each $v\in S$; this is proved  under the assumption that $f$ is a degree $n$ $k$-cover of geometric monodromy group $S_n$. 

\S \ref{ssec:number_fields} has a similar local-global flavor. The outcome is a generalization to general regular covers $f:X\rightarrow \Pp^1$ of the non-Galois analog above (corollary \ref{cor:effective}). On the way the following typical result of Fried is reproved (and generalized):  if the Galois group $\overline G\subset S_n$ over $\overline \Qq(T)$ of a degree $n$ polynomial $P(T,Y)\in \Qq(T)[Y]$ contains a $n$-cycle, then the associated Hilbert subset contains infinitely many arithmetic progressions with ratio a prime 
number. See \S \ref{ssec:number_fields} for details and references.

Here it is the relative flexibility of the local extensions obtained from global specializations that 
is the striking phenomenon. In the Galois situation, 
the very existence of global extensions with such local properties may sometimes even be questioned. 
Recall for example that results from \cite{DEGha} lead to some obstruction to the Regular Inverse Galois Problem (yet unproved to be not vacuous) related to some analytic questions around the Tchebotarev density theorem.

%

Other local-global situations can be considered, for example that of a base field that is a function field 
$\kappa(x)$ with $\kappa$ either a suitably large finite field or a PAC field with enough cyclic extensions. We refer to \cite{DEGha2} where these situations have been considered.

\subsubsection{} \hskip -1,5mm{\it Over ample fields} (definition recalled in \S \ref{ssec:ample_fields}),  
the twisting lemma leads to this general property 
of ample fields (corollary \ref{cor:ample}): if a $k$-cover 
$f:X\rightarrow B$ of curves specializes to some $k$-\'etale algebra $\prod_{l} E_l/k$ 
at some unramified point $t_0\in B(k)$, then it specializes to the same 
$k$-\'etale algebra $\prod_{l} E_l/k$ at infinitely many points $t\in B(k)$. 

\section{Basics} \label{sec:local_result}

Given a field $k$, we fix an algebraic closure $\overline k$ and denote the separable closure of $k$ in $\overline k$ by $k^\sep$ and its absolute Galois group by $\Gabs_k$. If $k^\prime$ is an overfield of $k$, we use the notation $\otimes_kk^\prime$ for the scalar extension from $k$ to $k^\prime$: for example, if $X$ is a $k$-curve,
$X\otimes_kk^\prime$ is the $k^\prime$-curve obtained by scalar extension. For more on this section, we refer to \cite[\S 2]{DeDo1} or \cite[chapitre 3]{coursM2}.

\subsection{Etale algebras and their Galois representations} \label{sssec:etale_algrebras}
Given a field $k$, a {\it $k$-\'etale algebra} is a product $\prod_{l=1}^s E_l/k$ of $k$-isomorphism classes of finite sub-field extensions $E_1/k, \ldots, E_s/k$ of $k^\sep/k$. Set $m_l=[E_l:k]$, $l=1,\ldots,s$ and $m=\sum_{l=1}^s m_l$. If $N/k$ is a Galois extension containing the Galois closures of $E_1/k, \ldots, E_s/k$, the Galois group $\Gal(N/k)$ acts by left multiplication on the left cosets of $\Gal(N/k)$ modulo $\Gal(N/E_l)$ for each $l=1,\ldots,s$. The resulting action $\Gal(N/k) \rightarrow S_m$ on all these left cosets, which is well-defined up to conjugation by elements of $S_m$, is called the {\it Galois representation of $\prod_{l=1}^s E_l/k$ relative to $N$}. Equivalently it can be defined as the 
action of $\Gal(N/k)$ on the set of all $k$-embeddings $E_l \hookrightarrow N$, $l=1,\ldots,s$.  

Conversely, an action $\mu: \Gal(N/k) \rightarrow S_m$ determines a $k$-\'etale algebra in the following way. For $i=1,\ldots,m$, denote the fixed field in $N$ of the subgroup of $\Gal(N/k)$ consisting of all $\tau$ such that $\mu(\tau)(i) = i$ by $E_i$. 
The product $\prod_lE_l/k$ for $l$ ranging over a set of representatives of the orbits 
of the action $\mu$ and where each extension $E_l/k$ is regarded modulo $k$-isomorphism is a $k$-\'etale algebra with $\sum_l [E_l:k] = m$. 

\vskip 1,5mm

\noindent
{\it {\rm G}-Galois variant}: if $\prod_{l=1}^s E_l/k$ is a {\it single Galois extension} $E/k$, the restriction
$\Gal(N/k)\rightarrow \Gal(E/k)$ is called {\it the {\rm G}-Galois representation of $E/k$} (relative to $N$). Any map $\varphi: \Gal(N/k) \rightarrow G$ obtained by composing $\Gal(N/k)\rightarrow \Gal(E/k)$ with a monomorphism $\Gal(E/k) \rightarrow G$ is called a G-{Galois} representation of $E/k$ 
(relative to $N$).
The extension $E/k$ can be recovered 
from $\varphi: \Gal(N/k) \rightarrow G$ by taking the fixed field in $N$ of ${\rm ker}(\varphi)$.  One obtains the Galois representation $\Gal(N/k)\rightarrow S_n$ of $E/k$ (relative to $N$) from a G-Galois representation $\varphi: \Gal(N/k) \rightarrow G$ (relative to $N$) by composing it
with the left-regular representation of the image group $\varphi(\Gal(N/k))$.


\subsection{Covers and function field extensions} \label{ssec:basic2}
Given a regular projective geometrically irreducible $k$-variety $B$, a {\it $k$-cover of $B$} is a finite and generically unramified morphism $f:X \rightarrow B$ defined over $k$ with $X$ a normal and
irreducible variety. Through the function field functor $k$-covers $f:X \rightarrow B$ correspond to finite separable field extensions $k(X)/k(B)$. The $k$-cover $f:X \rightarrow B$ is said to be {\it Galois} if the field extension $k(X)/k(B)$ is; if in addition $f:X\rightarrow B$ is given together with an
isomorphism $G\rightarrow \Gal(k(X)/k(B))$, it is called a $k$-G-Galois cover of group $G$. 

A $k$-cover $f:X\rightarrow B$ is said to be {\it regular} if $k(X)$ is a regular extension of $k$,
{\it i.e.} if $k(X)\cap \overline k=k$, or equivalently, if $X$ is geometrically irreducible. 
In general, there is some {\it constant extension} in $f:X\rightarrow B$, which we denote by $\widehat k_f/k$ and is defined by $\widehat k_f=k(X) \cap k^\sep$ (the special case $\widehat k_f=k$ corresponds to the situation  $f:X\rightarrow B$ is regular). 


If $f:X\rightarrow B$ is a $k$-cover, its Galois closure over $k$ is a Galois $k$-cover $g:Z \rightarrow B$, which {\it via} the cover-field extension dictionary, corresponds to the Galois closure of $k(X)/k(B)$. The Galois group $\Gal(k(Z)/k(B))$ is called the {\it monodromy group} of $f$. Denote next by $k^\sep(Z)$ the {\it compositum} of $k(Z)$ and $k^\sep$ (in a fixed separable closure of $k(B)$)\footnote{Note that as $g:Z\rightarrow B$ is Galois, $k(Z)$ only depends on the $k(B)$-isomorphism class of $k(X)/k(B)$ (but not on $k(X)/k(B)$ itself).}. The Galois group $\Gal(k^\sep(Z)/k^\sep(B))$ is called the {\it geometric monodromy group} of 
$f$; it is a normal subgroup of the monodromy group $\Gal(k(Z)/k(B))$. 
The {\it branch divisor} of the $k$-cover $f$ 
is 
the formal sum of all hypersurfaces of $B\otimes_k k^\sep$ such that the associated discrete 
valuations are ramified in the field extension $k^\sep(Z)/k^\sep(B)$. 

If $f:X\rightarrow B$ is regular, $f\otimes_kk^\sep$ is a $k^\sep$-cover, the Galois closure of its function field extension  is $k^\sep(Z)/k^\sep(B)$ and its branch divisor is the same as the branch divisor of $f$, and it is the formal sum of all hypersurfaces of $B\otimes_k k^\sep$ such that the associated discrete valuations are ramified in the field extension $k^\sep(X)/k^\sep(B)$.


\subsection{$\pi_1$-representations}
Given a reduced positive divisor $D\subset B$, denote the {\it $k$-fundamental group} of $B\setminus D$ by $\pi_1(B\setminus D, t)_k$ where $t\in B(\overline k)\setminus D$ is a base point (which corresponds to the choice of an algebraic closure of $k(B)$). Conjoining the two dictionaries covers-function field extensions and field extensions-Galois representations, we obtain the following correspondences: $k$-covers of $B$ of degree $n$ (resp. $k$-G-Galois covers of $B$ of group $G$) with branch divisor contained in $D$ correspond to transitive morphisms $\phi: \pi_1(B\setminus D, t)_k \rightarrow S_n$ 
(resp. to epimorphisms $\phi: \pi_1(B\setminus D, t)_k \rightarrow G$). 
The regularity property corresponds to the extra condition that the restriction of $\phi$ to $\pi_1(B\setminus D, t)_{k^\sep}$ remains transitive (resp. remains onto). 
These morphisms are called {\it fundamental group representations} ($\pi_1$-representations for short)
of the corresponding $k$-covers and $k$-G-Galois covers.

\subsection{Specializations} Each $k$-rational point $t_0\in B(k)\setminus D$ provides a section
${\rm s}_{t_0}: \Gabs_k\rightarrow \pi_1(B\setminus D, t)_k$ to the exact sequence

$$ 1\rightarrow \pi_1(B\setminus D, t)_{k^\sep} \rightarrow \pi_1(B\setminus D, t)_k \rightarrow \Gabs_k \rightarrow 1$$ 

\noindent
well-defined up to conjugation by elements in $\pi_1(B\setminus D, t)_{k^\sep}$. 

If $\phi: \pi_1(B\setminus D, t)_k \rightarrow G$ represents a $k$-G-Galois cover $f:X\rightarrow B$, the morphism $\phi \circ {\sf s}_{t_0}:\Gabs_k \rightarrow G$ is a G-Galois representation. The fixed field in $k^\sep$ of ${\rm ker}(\phi \circ {\sf s}_{t_0})$ is the residue field at some/any point above $t_0$ in the extension $k(X)/k(B)$. We denote it by $k(X)_{t_0}$ and call $k(X)_{t_0}/k$ {\it the specialization} of the $k$-G-Galois cover $f$ at $t_0$.

If $\phi: \pi_1(B\setminus D, t)_k \rightarrow S_n$ represents a $k$-cover $f:X\rightarrow B$, the morphism $\phi \circ {\sf s}_{t_0}:\Gabs_k \rightarrow S_n$ is the {\it specialization representation} 
of $f$ at $t_0$. The corresponding $k$-\'etale algebra is denoted by 
$\prod_{l=1}^s k(X)_{t_0,l}/k$ and called the {\it collection of specializations} of $f$ at $t_0$. Each field $k(X)_{t_0,l}$ is a residue extension at some prime above $t_0$ in the extension $k(X)/k(B)$ and {\it vice-versa}; $k(X)_{t_0,l}$ is called {\it a specialization} of $f$ at $t_0$. The {\it compositum} in $k^\sep$ of the Galois closures of all spe\-cia\-li\-zations at $t_0$ is {\it the} specialization at $t_0$ of the Galois closure of $f$ (viewed as a $k$-G-Galois cover). If the $k$-cover $f$ is regular, the fields $k(X)_{t_0,l}$ correspond to the definition fields of the points in the fiber $f^{-1}(t_0)$ and $\phi \circ {\sf s}_{t_0}:\Gabs_k \rightarrow S_n$ to the {\it action} of $\Gabs_k$ on them. 



\section{The twisting lemma} \label{sec:twisting}

Given a field $k$, the question we address is whether a given $k$-cover specializes to some given $k$-\'etale algebra at some unramified $k$-rational point. We first consider the situation of Galois covers in \S \ref{ssec:Galois_covers} and then handle the non-Galois situation 
in \S \ref{ssec:twisting-lemma-general-form} by ``going to the Galois closure''. The Galois situation was considered in \cite{DEGha} in the special case of {\it regular} Galois covers. But the Galois closure of a $k$-cover is not regular in general, even if $f:X\rightarrow B$ is regular, and this special case needs to be extended. \S \ref{ssec:Galois_covers} is a generalization of  the twisting lemma  from \cite{DEGha} to not necessarily regular Galois covers. 


\subsection{The twisting lemma for Galois covers} \label{ssec:Galois_covers}
Fix the field $k$ and a Galois $k$-cover $g:Z\rightarrow B$. Denote its branch divisor by $D$, the Galois group $\Gal(k(Z)/k(B))$ by $G$,  the $\pi_1$-representation of  the $k$-G-Galois cover $g: Z \rightarrow B$ by $\phi: \pi_1(B\setminus D, t)_k \rightarrow G$,
the geometric monodromy group $\Gal(k^\sep(Z)/k^\sep(B))$ by $\overline G$ and the {constant extension} in $g: Z \rightarrow B$ by $\widehat k_g/k$. 

\subsubsection{Twisting Galois covers} \label{ssec:twisting}
Let $N/k$ be some Galois extension with Galois group $H$ isomorphic to a subgroup of $G$.
With no loss we may and will view $H$ itself as a subgroup of $G$.
We assume the following {\it compatibility condition} of $N/k$ with the 
constant extension $\widehat k_g/k$:
\medskip

\noindent
({\rm const/comp}) {\it the fixed field $N^{H\cap \overline G}$ of $H \cap \overline G$ in $N$ is the field $\widehat k_g$}.
\medskip

\noindent
This condition is trivially satisfied in the regular case as both fields $N^{H\cap \overline G}$ and $\widehat k_g$ equal $k$.

Consider  the homomorphism $\Lambda: \Gabs_k \rightarrow G/\overline G$ induced by $\phi$ on the quotient 
$\Gabs_k = \pi_1(B\setminus D, t)_k/\pi_1(B\setminus D, t)_{k^\sep}$. The map $\Lambda$ is a G-Galois representation of the constant extension $\widehat k_g/k$ (relative to $k^{\sep}$); it is called the {\it constant extension map} \cite[\S 2.8]{DeDo1}.  As it is surjective, we have $\Gal(\widehat k_g/k) \simeq G/\overline G$ and so 
condition ({\rm const/comp})  implies that $H\overline G = G$.

Let  $\varphi: \Gabs_k \rightarrow H$ be the G-Galois representation of the Galois extension $N/k$ (relative to $k^{\sep}$) and $\overline{\varphi}: \Gabs_k \rightarrow G/\overline{G}$ be the composed map of $\varphi$ with the canonical surjection $\overline{\hskip 2pt \sxbullet \hskip 2pt}: 
G\rightarrow G/\overline G$. Hypothesis ({\rm const/comp}) rewrites as follows:

\vskip 4mm

\noindent
({\rm const/comp}) \hskip -1mm {\it There exists $\overline \chi \in {\rm Aut}(G/\overline G)$ such that $\Lambda = \overline \chi \circ \overline \varphi$.}
\vskip 3mm

\noindent (The equivalence follows from $\widehat k_g = (k^\sep)^{{\rm ker}(\Lambda)}$ and
\vskip 1mm

\centerline{$\displaystyle (k^\sep)^{{\rm ker}(\overline{\varphi})} = ((k^\sep)^{{\rm ker}(\varphi)})^{{\rm ker}(\overline{\varphi})/{\rm ker}(\varphi)} = N^{\varphi({\rm ker}(\overline \varphi))} = N^{H\cap \overline G}$\hskip 2mm.}
\vskip 1mm

\noindent
Also note that as $\Lambda:\Gabs_k \rightarrow G/\overline{G}$ is onto, an automorphism $\overline \chi$ satisfying ({\rm const/comp}) is unique).

Assume there exists an isomorphism $\chi:H\rightarrow H^\prime$ onto a subgroup $H^\prime \subset G$ that induces $\overline \chi$ modulo $\overline G$. 
With ${\rm Per}(G)$ the permutation group of $G$, consider then the map 

$$\widetilde \phi^{\chi \varphi}: \pi_1(B\setminus D, t)_k \rightarrow {\rm Per}(G)$$

\noindent
defined by this formula, where 
$r$ is the restriction $\pi_1(B\setminus D, t)_k \rightarrow \Gabs_k$:
for  $\theta\in \pi_1(B\setminus D, t)_k$ and $x\in G$,

$$ \widetilde \phi^{\chi \varphi}(\theta)(x)  =
\phi(\theta) \hskip 3pt x  \hskip 4pt (\chi \circ \varphi \circ r) (\theta)^{-1}$$

\noindent
It is easily checked that $\widetilde \phi^{\chi \varphi}$ is a group homomorphism.
However the corresponding action of $\pi_1(B\setminus D, t)_k$ on $G$ is not transitive in general.
More precisely we have the following.

\begin{lemma} \label{lem:appendix}
Under hypothesis {\rm (const/comp)}, we have $\widetilde \phi^{\chi \varphi}(\theta) (\overline G) \subset \overline G$ for every $\theta\in \pi_1(B\setminus D, t)_k$. 
\end{lemma}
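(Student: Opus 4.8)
The plan is to test membership in $\overline G$ by projecting everything to the quotient $G/\overline G$, which is legitimate because $\overline G$ is the geometric monodromy group, hence a normal subgroup of $G=\Gal(k(Z)/k(B))$. So it suffices to show that for every $\theta\in \pi_1(B\setminus D, t)_k$ and every $x\in \overline G$, the image of $\widetilde\phi^{\chi\varphi}(\theta)(x)=\phi(\theta)\, x\, (\chi\circ\varphi\circ r)(\theta)^{-1}$ under the canonical surjection $\overline{\hskip 2pt \sxbullet \hskip 2pt}:G\rightarrow G/\overline G$ is trivial.

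First I would apply $\overline{\hskip 2pt \sxbullet \hskip 2pt}$ to the defining formula. Since $\overline G$ is normal, this projection is a homomorphism, so the right-hand side becomes a product of three cosets; and since $x\in \overline G$, the middle factor $\overline x$ is trivial. What survives is $\overline{\phi(\theta)}\cdot \overline{(\chi\circ\varphi\circ r)(\theta)}^{-1}$, and the whole point will be that these two factors coincide.

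The key step is to recognise each surviving factor as the value at $r(\theta)$ of a map out of $\Gabs_k$. By the very definition of the constant extension map, $\Lambda:\Gabs_k\rightarrow G/\overline G$ is the map induced by $\phi$ on the quotient, so $\overline{\phi(\theta)}=\Lambda(r(\theta))$. For the second factor I would use that $\chi$ induces $\overline\chi$ modulo $\overline G$ and that $\overline\varphi$ is by definition $\varphi$ composed with $\overline{\hskip 2pt \sxbullet \hskip 2pt}$, which gives $\overline{(\chi\circ\varphi\circ r)(\theta)}=(\overline\chi\circ\overline\varphi)(r(\theta))$. At this stage I would invoke hypothesis {\rm (const/comp)} in its rewritten form $\Lambda=\overline\chi\circ\overline\varphi$, which forces the two factors to be equal and hence their quotient to be $1$ in $G/\overline G$. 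This yields $\widetilde\phi^{\chi\varphi}(\theta)(x)\in \overline G$, as desired.

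The main obstacle is not a genuine difficulty but careful bookkeeping: one must check that each factor really descends to $\Gabs_k$ and is computed by the intended map, in particular that $\overline{\chi(h)}=\overline\chi(\overline h)$ for $h\in H$ (the precise meaning of ``$\chi$ induces $\overline\chi$ modulo $\overline G$'') and that the surjectivity $H\overline G=G$ guaranteed by {\rm (const/comp)} makes the composite $H\hookrightarrow G\rightarrow G/\overline G$ transparent. Once these identifications are in place the cancellation is immediate, and it is precisely the compatibility condition {\rm (const/comp)} that is responsible for it.
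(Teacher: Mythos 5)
Your proof is correct and is essentially identical to the paper's: both project the defining formula through the canonical surjection $G\rightarrow G/\overline G$, kill the middle factor $\overline x$, identify the remaining factors as $\Lambda(r(\theta))$ and $(\overline\chi\circ\overline\varphi)(r(\theta))$, and conclude by the rewritten form $\Lambda=\overline\chi\circ\overline\varphi$ of {\rm (const/comp)}. The only difference is that you spell out the bookkeeping (normality of $\overline G$, the meaning of ``$\chi$ induces $\overline\chi$'') that the paper leaves implicit.
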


\begin{proof}
For all $\theta\in \pi_1(B\setminus D, t)_k$ and $x\in \overline G$, we have:
\vskip 2mm

\hskip -2mm $\overline{\widetilde \phi^{\chi \varphi}(\theta)(x)} =\overline{\phi(\theta)} \hskip 2pt . \hskip 2pt\overline x   \hskip 1pt. \overline{(\chi \circ \varphi \circ r) (\theta)}^{-1} =  \Lambda(r (\theta)) \hskip 1pt. \hskip 1pt\overline \chi (\varphi(r(\theta)))^{-1}= 1$
\end{proof}

Consider the morphism, denoted by $\widetilde \phi^{\chi \varphi}_{\overline G}: \pi_1(B\setminus D, t)_k \rightarrow {\rm Per}(\overline G)$, that sends  $\theta\in \pi_1(B\setminus D, t)_k$ to the restriction of $\widetilde \phi^{\chi \varphi}(\theta)$ on $\overline G$. Its  restriction $\pi_1(B\setminus D, t)_{k^\sep} \rightarrow {\rm Per}(\overline G)$ is given by

$$ \widetilde \phi^{\chi \varphi}(\theta)(x)  =
\phi(\theta) \hskip 3pt x  \hskip 7mm  (\theta\in \pi_1(B\setminus D, t)_{k^\sep}, x\in \overline G)$$

\noindent
Thus this restriction is obtained by composing the original $\pi_1$-re\-pre\-sen\-tation $\phi$ restricted to $\pi_1(B\setminus D, t)_{k^\sep}$ with the left-regular representation $\overline G\rightarrow {\rm Per}(\overline G)$ of $\overline G$. This shows that $\widetilde \phi^{\chi \varphi}_{\overline G}: \pi_1(B\setminus D, t)_k \rightarrow {\rm Per}(\overline G)$ is the $\pi_1$-representation of some regular $k$-cover, which we denote by $\widetilde g^{\chi \varphi}: \widetilde Z^{\chi \varphi} \rightarrow B$ 
and call the {\it twisted cover} of $g$ by $\chi \varphi$.

\subsubsection{Statement of the twisting lemma for Galois covers} \label{ssec:statement_twisting-lemma}
The following statement gives the main property of the twisted cover.

Some notation is needed.  Conjugation automorphisms in some group ${\mathcal G}$ are denoted by ${\rm conj}(\omega)$ for $\omega\in {\mathcal G}$: ${\rm conj}(\omega)(x) = \omega \hskip 2pt x \hskip 2pt \omega^{-1}$ ($x\in {\mathcal G}$). The set of all isomorphisms $\chi:H\rightarrow H^\prime$ onto a subgroup $H^\prime \subset G$ that induce $\overline{\chi}$ modulo $\overline G$ is denoted by 
${\rm Isom}_{\overline \chi}(H,H^\prime)$. 

Fix then a set $\{\chi_\gamma:H\rightarrow H_\gamma \hskip 2pt | \hskip 2pt \gamma \in \Gamma\}$ of representatives of all isomorphims $\chi \in {\rm Isom}_{\overline \chi}(H,H^\prime)$ with $H^\prime$ ranging over all subgroups of $G$ isomorphic to $H$, modulo the equivalence 
that identifies $\chi_1 \in {\rm Isom}_{\overline \chi}(H,H^\prime_1)$ and $\chi_2 \in {\rm Isom}_{\overline \chi}(H,H^\prime_2)$ if $H^\prime_2=\omega \hskip 1pt H^\prime_1 \hskip 1pt \omega^{-1}$ and $\chi_2 \chi_1^{-1} = {\rm conj}(\omega)$ for some $\omega \in \overline G$.



\begin{twisting lemma}[Galois form] \label{prop:general_twisted_cover} 
Under condition {\rm (const/comp)}, we have the following conclusions {\rm (a)} and {\rm (b)}. 

\vskip 1,4mm
\noindent
{\rm (a)} For each subgroup $H^\prime\subset G$ isomorphic to $H$, each $\chi \in {\rm Isom}_{\overline \chi}(H,H^\prime)$
and each $t_0\in B(k)\setminus D$, these conditions are equivalent:

\vskip 0,4mm

\noindent
{\rm (i)} there exists a point $x_0\in \widetilde Z^{\chi \varphi} (k)$ such that 
$\widetilde g^{\chi \varphi} (x_0)=t_0$,

\vskip 1mm

\noindent
{\rm (ii)} there is $\omega \in \overline G$ such that $(\phi \circ {\sf s}_{t_0})(\tau) = 
\omega \hskip 2pt (\chi \circ \varphi) (\tau) 
\hskip 2pt \omega^{-1}$, $\tau \in \Gabs_k$, 
(where ${\sf s}_{t_0}: \Gabs_k \rightarrow \pi_1(B\setminus D, t)_k$ is the section
associated with $t_0$).

\vskip 3mm

\noindent
{\rm (b)} For each $t_0\in B(k)\setminus D$, 
the following are equivalent:
\vskip 0,8mm

\noindent
{\rm (iii)} the specialization $k(Z)_{t_0}/k$ of the $k$-{\rm G}-Galois cover $g:Z\rightarrow B$ is the extension $N/k$, 
\vskip 0,6mm

\noindent
{\rm (iv)}  there exists an isomorphism $\chi \in {\rm Isom}_{\overline \chi}(H,\phi \circ {\sf s}_{t_0}(\Gabs_k))$
such that conditions {\rm (i)}-{\rm (ii)}  hold for this $\chi$,



\vskip 0,6mm

\noindent
{\rm (v)} there exists $\gamma \in \Gamma$ such that conditions {\rm (i)}-{\rm (ii)} hold for $\chi=\chi_\gamma$.
\vskip 0,8mm

\noindent 
Furthermore an element $\gamma \in \Gamma$ as in {\rm (v)} is necessarily unique.
%
%

\end{twisting lemma}

A {\it single} twisted cover is involved in (a) while there are several in (b).
In this respect the representation viewpoint used in (a) may look more natural than the field extension one in (b). The latter however is more useful in practice. Also note that conditions (iv)-(v), being equivalent to (iii), do not depend on the chosen $\pi_1$-representation $\phi: \pi_1(B\setminus D, t)_k \rightarrow G$ of $g:Z\rightarrow B$ modulo conjugation by elements of $G$.

\begin{remark} \label{rem:after_twisting_non-regular_galois}
(a) Existence of some subgroup $H^\prime \subset G$ such that  the set ${\rm Isom}_{\overline \chi}(H,H^\prime)$ is non-empty, which amounts to $\Gamma \not= \emptyset$, is not guaranteed;
in this case conditions (iii)-(iv)-(v) fail.
It is however guaranteed under each of the assumptions $\overline \chi = {\rm Id}_{G/\overline G}$ or ${\rm Out}(G/\overline G)=\{1\}$. Indeed if $\overline \chi = {\rm Id}_{G/\overline G}$, then ${\rm Id}_H\in {\rm Isom}_{\overline \chi}(H,H)$, and if ${\rm Out}(G/\overline G)=\{1\}$, {\it i.e.},  if every automorphism of $G/\overline G$ is inner, then, as $H\overline G = G$, every inner automorphism ${\rm conj}(\overline \omega)$ with $\overline \omega \in G/\overline G$ lifts to some isomorphism 
${\rm conj}(\omega): H\rightarrow H$ with $\omega \in H$. Both assumptions includes the  regular case as then $G/\overline G=\{1\}$.
\vskip 1,5mm

\noindent
(b) Some uniqueness property can be added to (iv), as in (v). Indeed an isomorphism 
$\chi \in {\rm Isom}_{\overline \chi}(H,\phi \circ {\sf s}_{t_0}(\Gabs_k))$ satisfying conditions 
(i)-(ii), as the one in (iv), is necessarily unique up to left composition by ${\rm conj}(\omega)$ 
with $\omega \in {\rm Nor}_{\overline G}(\phi \circ {\sf s}_{t_0}(\Gabs_k))$. The advantage
of condition (v) is that the set $\bigcup_{\gamma\in \Gamma}\widetilde Z^{\chi_\gamma \varphi}(k)$
where unramified $k$-rational points should be found to conclude that (iii) holds does
not depend on $t_0$ (although the element $\gamma \in \Gamma$ in (v) does). Moreover the 
uniqueness property in (v) makes it easier to count the points $t_0 \in B(k)$ for which (iii) holds.
\vskip 1,5mm

\noindent
(c) The proof of (i) $\Leftrightarrow$ (ii) below shows further that the number of $k$-rational points on $\widetilde Z^{\chi \varphi}$ above some given unramified point $t_0\in B(k)$, if positive, is equal to the order of the group ${\rm Cen}_{\overline G}(\chi(H))$. 
\end{remark}

\subsubsection{Proof of the twisting lemma \ref{prop:general_twisted_cover}}
(a) Fix  a subgroup $H^\prime\subset G$ isomorphic to $H$, an isomorphism $\chi \in {\rm Isom}_{\overline \chi}(H,H^\prime)$ and a point $t_0\in B(k)\setminus D$.
The map $\widetilde \phi^{\chi \varphi}_{\overline G} \circ {\sf s}_{t_0}: \Gabs_k \rightarrow {\rm Per}(\overline G)$ 
is the action of $\Gabs_k$ on the fiber $(\widetilde g^{\chi \varphi})^{-1}(t_0)$; it is given by

$$ \widetilde \phi^{\chi \varphi}_{\overline G}({\sf s}_{t_0}(\tau)) = \phi({\sf s}_{t_0}(\tau)) \hskip 3pt  x \hskip 4pt  (\chi \circ \varphi) (\tau)^{-1} \hskip 6mm (\tau \in \Gabs_k, x\in \overline G)$$

\noindent
Elements 
$\widetilde \phi^{\chi \varphi}_{\overline G}({\sf s}_{t_0}(\tau))$ have a common fixed point $\omega \in \overline G$ if and only if  $\phi({\sf s}_{t_0}(\tau)) = \omega \hskip 2pt (\chi \circ \varphi) (\tau) 
\hskip 2pt \omega^{-1}$ ($\tau \in \Gabs_k$). This yields (i) $\Leftrightarrow$ (ii). Furthermore, the set of all $\omega \in \overline G$ satisfying the preceding condition, if non empty, is a left coset $\omega_0 \hskip 2pt {\rm Cen}_{\overline G}(\chi(H))$; this proves remark \ref{rem:after_twisting_non-regular_galois} (c).
\vskip 2,5mm

\noindent
(b) Fix $t_0\in B(k)\setminus D$ and a representative of the section ${\sf s}_{t_0}: \Gabs_k \rightarrow \pi_1(B\setminus D, t)_k$ (defined up to conjugation in $\pi_1(B\setminus D, t)_{k^\sep}$).
\vskip 1mm 

Implication (iv) $\Rightarrow$ (iii) follows from the fact that if $\chi \in {\rm Isom}_{\overline \chi}(H,\phi \circ {\sf s}_{t_0}(\Gabs_k))$ satisfies (i)-(ii), then ${\rm ker}(\phi \circ {\sf s}_{t_0})$ and ${\rm ker}(\varphi)$ are equal, and so so are their fixed fields in $k^\sep$. Conversely assume that the extensions $k(Z)_{t_0}/k$ and $N/k$ are equal, {\it i.e.} ${\rm ker}(\phi \circ {\sf s}_{t_0})$ and ${\rm ker}(\varphi)$ are the same subgroup, say ${\mathcal K}$, of $\Gabs_k$. The two morphisms $\phi \circ {\sf s}_{t_0}:\Gabs_k \rightarrow \phi \circ {\sf s}_{t_0}(\Gabs_k)\subset G$ and $\varphi:\Gabs_k \rightarrow H\subset G$ then differ from $\Gabs_k \rightarrow \Gabs_k/{\mathcal K}$ by some isomorphisms $\phi \circ {\sf s}_{t_0}(\Gabs_k)\rightarrow \Gabs_k/{\mathcal K}$ and $H\rightarrow \Gabs_k/{\mathcal K}$, respectively. Thus they differ from one another by an isomorphism $\chi:H\rightarrow \phi \circ {\sf s}_{t_0}(\Gabs_k)$:  $\phi \circ {\sf s}_{t_0} = \chi \circ \varphi$. 
It follows from this and from uniqueness of $\overline \chi$ satisfying (const/comp) that $\chi$ automatically induces $\overline \chi$ modulo $\overline G$. Conclude that $\chi \in {\rm Isom}_{\overline \chi}(H,\phi \circ {\sf s}_{t_0}(\Gabs_k))$ and conditions (i)-(ii) hold for this $\chi$. 
\vskip 1mm

Assume (v) holds, {\it i.e.}, for some $\gamma \in \Gamma$, condition (i)-(ii) are satisfied for the isomorphism $\chi_\gamma:H\rightarrow H_\gamma$ and some $\omega \in \overline G$. It readily follows that $\chi = {\rm conj}(\omega) \circ \chi_\gamma$ also satisfies (ii) and is in  ${\rm Isom}_{\overline \chi}(H, \phi \circ {\sf s}_{t_0}(\Gabs_k))$.
This establishes (iv).
Conversely assume (iv) holds. Let  $\chi \in {\rm Isom}_{\overline \chi}(H,\phi \circ {\sf s}_{t_0}(\Gabs_k))$ 
be an isomorphism such that conditions (i)-(ii) hold, for some $\omega \in  \overline G$. 
There exist $\gamma \in \Gamma$ and $\omega^\prime \in \overline G$ such that $\chi = {\rm conj}(\omega^\prime) \circ \chi_\gamma$. It follows that 
condition {\rm (ii)} holds for $\chi_\gamma$ as well (with conjugation factor $\omega \omega^\prime$). Uniqueness of $\gamma \in \Gamma$ in condition (v) readily follows from condition (ii) and the definition of the set $\{\chi_\gamma \hskip 2pt | \hskip 2pt \gamma \in \Gamma\}$. $\square$


\subsection{The general form of the twisting lemma} \label{ssec:twisting-lemma-general-form}
We fix a degree $n$ $k$-cover $f:X\rightarrow B$ and a degree $n$ $k$-\'etale algebra $\prod_{l=1}^s E_l/k$ and the question we address is whether $\prod_{l=1}^s E_l/k$ {\it is 
the collection $\prod_{l} k(X)_{t_0,l}/k$ of specializations of $f:X\rightarrow B$ at some point $t_0\in B(k)$}.


\subsubsection{Statement of the result} \label{ssec:statement_twisting_general}
Denote the branch divisor of $f:X\rightarrow B$ by $D$, its Galois closure by $g:Z\rightarrow B$, the Galois group $\Gal(k(Z)/k(B))$ by $G$,  the $\pi_1$-representation of  the $k$-G-Galois cover $g: Z \rightarrow B$ by $\phi: \pi_1(B\setminus D, t)_k \rightarrow G$, the Galois representation of the field extension 
$k(X)/k(B)$ relative to $k(Z)$ by $\nu: G\rightarrow S_n$,  the geometric monodromy group $\Gal(k^\sep(Z)/k^\sep(B))$ by $\overline G$ and the {constant extension} in $g: Z \rightarrow B$ by $\widehat k_g/k$. 

Let $N/k$ be the {\it compositum} inside $k^\sep$ of the Galois closures of the extensions $E_l/k$,
$l=1,\ldots, s$, and $H=\Gal(N/k)$. A necessary condition for a positive answer to the question is
 that $N$ be the {\it compositum} inside $k^\sep$ of the Galois closures of the extensions $k(X)_{t_0,l}/k$. In particular, $H$ should be isomorphic to some subgroup of $G$. From now on we will assume it. With no loss we may then and will view $H$ as a subgroup of $G$.
Finally let $\varphi: \Gabs_k \rightarrow H$ be the G-Galois representation of $N/k$ relative to $k^\sep$ and $\mu: H\rightarrow S_n$ be the Galois representation of 
$\prod_{l=1}^s E_l/k$ relative to $N$.

Some further notation from \S \ref{ssec:Galois_covers} is retained. The constant extension compatibility condition (const/comp) determines a unique automorphism $\overline \chi$ of $G/\overline G$ (\S \ref{ssec:twisting}). The twisted cover $\widetilde g^{\chi \varphi}: \widetilde Z^{\chi \varphi} \rightarrow B$ is defined for every isomorphism $\chi:H\rightarrow H^\prime$ onto a subgroup $H^\prime \subset G$ inducing $\overline{\chi}$ modulo $\overline G$ (\S \ref{ssec:twisting}). The set 
of all such isomorphisms $\chi:H\rightarrow H^\prime$ is denoted by ${\rm Isom}_{\overline \chi}(H,H^\prime)$. The isomorphisms $\chi_\gamma:H\rightarrow H_\gamma$ ($\gamma\in \Gamma$) are defined in \S \ref{ssec:statement_twisting-lemma}.









 
\begin{twisting lemma}[general form] \label{prop:twisted cover} 
Let $f:X\rightarrow B$ be a  $k$-cover and $\prod_{l=1}^s E_l/k$ be a $k$-\'etale algebra as 
above. Assume further that condition {\rm (const/comp)} from  \S \ref{ssec:twisting} holds for the Galois closure 
$g:Z\rightarrow B$ of $f$. Then for each $t_0\in B(k)\setminus D$, the following conditions
are equivalent:
\vskip 1,5mm

\noindent
{\rm (i)} $\prod_{l} E_l/k$ is the collection $\prod_{l} k(X)_{t_0,l}/k$
of specializations of $f:X\rightarrow B$ at the point $t_0$.
\vskip 1mm

\noindent
{\rm (ii)}  there is a subgroup $H^\prime \subset G$ isomorphic to $H$ and an isomorphism $\chi \in {\rm Isom}_{\overline \chi}(H,H^\prime)$ such that  

\hskip 0mm {\rm 1.} there exists $x_0\in \widetilde Z^{\chi \varphi}(k)$  with $\widetilde g^{\chi \varphi}(x_0)=t_0$, and  
\vskip 1mm

\hskip 0mm {\rm 2.} there exists $\sigma \in S_n$  that $\nu \circ \chi (h) = \sigma \hskip 2pt \mu(h) \hskip 2pt  \sigma^{-1}$ for every $h\in H$.
\vskip 1mm

\noindent
Furthermore if {\rm (ii)} holds, it holds for some isomorphism $\chi_\gamma: H\rightarrow H_\gamma$ for some $\gamma \in \Gamma$ and the element $\gamma$ is then necessarily unique.

%
\end{twisting lemma}

\subsubsection{About condition {\rm (ii-2)}} \label{remark:twisting_general} We focus on condition {\rm (ii-2)} which is the group-theoretical part of condition (ii) (while condition (ii-1) is the diophantine part).
\vskip 1mm

We first note for later use that the number of $\gamma \in \Gamma$ for which condition {\rm (ii-2)} holds for $\chi=\chi_\gamma$, 
if positive, is equal to the number of distinct isomorphisms $\chi_\gamma, \chi_{\gamma^\prime}$
($\gamma, \gamma^\prime \in \Gamma$) such that the actions $\nu\circ \chi_\gamma: H\rightarrow S_n$ and $\nu\circ \chi_{\gamma^\prime}: H\rightarrow S_n$ are conjugate in $S_n$.
\vskip 1mm

Below we give three standard situations where condition (ii-2) holds.
\vskip 1mm

\noindent
(a) {\it geometric monodromy group $S_n$}: $G=\overline G = S_n$ as in \cite{DeLe1}.
Condition (const/comp) holds and $\nu:S_n\rightarrow S_n$ is the natural action: $\nu = {\rm Id}_{S_n}$. Condition $\nu \circ \chi_\gamma (h) = \sigma \hskip 2pt \mu(h) \hskip 2pt  \sigma^{-1}$ ($h\in H$) is satisfied with $\chi_\gamma$ the representative of the isomorphism $\mu: H\rightarrow S_n$ (and some $\sigma \in S_n$).


\vskip 2mm

\noindent
(b) {\it Galois situation}: $f:X\rightarrow  B$ is a Galois $k$-cover, $\prod_{l} E_l/k$ is a Galois 
field extension $E/k$ of group $H\subset G$ and $\Gamma \not= \emptyset$. Then $\nu$ is the left-regular representation $G\rightarrow {\rm Per}(G)$ and $\mu$ its restriction $H\rightarrow {\rm Per}(G)$.
Note next that if $\gamma \in \Gamma$, the restriction $\nu|_H: H\rightarrow {\rm Per}(G)$ and $\nu \circ \chi_\gamma:H\rightarrow {\rm Per}(G)$ are conjugate actions. Condition (ii-2) follows.
\vskip 2mm

In (c) below, the {\it type of a permutation} $\sigma \in S_n$ is the (multiplicative) divisor of all lengths of disjoint cycles involved in the cycle decomposition of $\sigma$ (for example, an $n$-cycle is of type $n^1$).
\vskip 1mm

\noindent
(c) {\it cyclic specializations}:  condition (const/comp) holds, $H$ is a cyclic subgroup of $G$ generated by an element $\omega$ such that $\nu(\omega)$ is of type equal to the divisor of all 
degrees $[E_l:k]$ of field extensions in the \'etale algebra $\prod_{l} E_l/k$. 
\vskip 0,7mm

Indeed for every integer $a\geq 1$ such that $(a,|H|)=1$, let $\chi_a: H\rightarrow H$ be the morphism that maps $\omega$ to $\omega^a$. As $H\overline G = G$, each map $\chi_a$ induces an automorphism of the cyclic group $G/\overline G$. Then there is necessarily an integer $a\geq 1$ such that $\chi_a$ induces $\overline \chi$ modulo $\overline G$ and $(a,|H|)=1$ \footnote{An exercise: this amounts to showing that if $b$ is an integer prime to 
$\nu = |G/\overline G|$ and $|G| =\mu \nu$, then there exists an integer $a=b+k\nu$
that is prime to $\mu \nu$. Take for $k$ the product of the prime divisors of $\mu$ 
that do not divide $b$.}. From the hypothesis, the types of $\nu(\omega)$ and $\mu(\omega)$ are the same. But so are the types of $\nu (\omega)$ and $\nu \circ \chi_a(\omega)$. Conclude that the actions $\nu \circ \chi_a$ and $\mu$ are conjugate.
\vskip 1,5mm




%

\subsubsection{Comparizon with previous forms} \label{ssec:statement_twisting_general}
We compare the general form (lemma \ref{prop:twisted cover}) with the Galois form (lemma \ref{prop:general_twisted_cover}) and the geometric monodromy group 
$S_n$ form \cite[lemma 2.1]{DeLe1}
of the twisting lemmas.
\vskip 2mm

\noindent
{\it Lemma \ref{prop:twisted cover} (general form) $\Rightarrow$ lemma \ref{prop:general_twisted_cover} (Galois form}):
%
Both forms have the assumption (const/comp). In the Galois situation from \S \ref{ssec:Galois_covers}, the $k$-cover is {\it Galois} (and so $f=g$) and the $k$-\'etale algebra is a {\it Galois} field extension 
$E/k$ with group $\Gal(E/k)=H$ (so $\prod_{l=1}^s E_l/k= E/k$ and $N=E$). 
Then statement
(i) $\Leftrightarrow$ (ii) in lemma \ref{prop:twisted cover} exactly corresponds to statement 
(iii) $\Leftrightarrow$ (v) in lemma \ref{prop:general_twisted_cover}. 

Indeed condition (ii) from lemma \ref{prop:twisted cover} reduces to its first part (ii-1) (see \S \ref{remark:twisting_general} (b))
and then coincides with condition (v) from lemma \ref{prop:general_twisted_cover}, 
and condition (i) from lemma \ref{prop:twisted cover} corresponds to condition (iii) from lemma \ref{prop:general_twisted_cover}
(note that the \'etale algebra $\prod_{l} E_l/k$ (resp. $\prod_{l} k(X)_{t_0,l}/k$) from condition (i) is a product of $|G|/|H|$ copies of the Galois field extension $E/k$ (resp. $k(X)_{t_0}/k$)).

\vskip 2mm

\noindent
{\it Lemma \ref{prop:twisted cover} (general form) $\Rightarrow$ lemma 2.1 from \cite{DeLe1}}: 
In \cite{DeLe1}, the $k$-cover $f:X\rightarrow B$ is of degree $n$ and geometric monodromy group $S_n$. Then 
$G=\overline G = S_n$, that is, we are in the standard situation (a) from \S \ref{remark:twisting_general}.
Thus condition (ii-2) holds.
%
The twisted cover $\widetilde g^N: \widetilde Z^N\rightarrow B$ in \cite[lemma 2.1]{DeLe1} is 
the twisted cover $\widetilde g^{\mu \varphi}: \widetilde Z^N\rightarrow B$ in this paper. 
Conclude that  (i) $\Rightarrow$ (ii) in  \cite[lemma 2.1]{DeLe1}  exactly corresponds to (ii) $\Rightarrow$ (i) in lemma \ref{prop:twisted cover}.

\subsubsection{Proof of the twisting lemma \ref{prop:twisted cover}} \label{ssec:proof_of_twisting_lemma}  We will use the Galois form of the twisting lemma 
to establish the general form. 
\vskip 1mm

\noindent
(i) $\Rightarrow$ (ii):
Assume (i) holds. Necessarily $N$ is the compositum of the Galois closures of the extensions $k(X)_{t_0,l}/k$. From the twisting lemma \ref{prop:general_twisted_cover} for Galois covers,
there is a unique $\gamma \in \Gamma$ satisfying condition (ii-1) from lemma \ref{prop:twisted cover}. And from lemma \ref{prop:general_twisted_cover} (a), this last condition is equivalent to existence of some $\omega \in \overline G$ such that $(\phi \circ {\sf s}_{t_0})(\tau) = \omega \hskip 2pt (\chi_\gamma \circ \varphi) (\tau) 
\hskip 2pt \omega^{-1}$ for all $\tau \in \Gabs_k$. Thus we obtain:

\vskip 0mm

$$(\nu \circ \phi \circ {\sf s}_{t_0})(\tau) = 
\nu(\omega) \hskip 3pt (\nu \circ \chi_\gamma \circ \varphi) (\tau) 
\hskip 3pt \nu(\omega)^{-1} \hskip 5mm (\tau \in \Gabs_k)$$

\noindent
But condition (i) gives $\nu \circ \phi \circ {\sf s}_{t_0} (\tau) = \beta \hskip 2pt \mu \circ \varphi(\tau)\hskip 2pt
\beta^{-1}$ $(\tau \in \Gabs_k)$, for some $\beta \in S_n$. Conjoining these equalities yields condition  (ii-2).
\vskip 1mm

\noindent
(ii) $\Rightarrow$ (i): Assume (ii) holds. From lemma \ref{prop:general_twisted_cover}, existence of $x_0\in \widetilde Z^{\chi \varphi}(k)$ such that $\widetilde g^{\chi \varphi}(x_0)=t_0$ implies that $N$ is 
the compositum of the Galois closures of the $k(X)_{t_0,l}$, and so we have  $(\phi \circ {\sf s}_{t_0})(\tau) = \omega \hskip 2pt (\chi\circ \varphi) (\tau) 
\hskip 2pt \omega^{-1}$ for some $\omega \in \overline G$ and all $\tau \in \Gabs_k$. 



Denote the orbits of $\mu: H\rightarrow S_n$, which correspond to the extensions $E_1,\ldots E_s$, 
by ${\mathcal O}_1,\ldots, {\mathcal O}_s$.
Fix one of them, {\it i.e.} $l\in \{1,\ldots,s\}$, and let $i\in \{1,\ldots, n\}$ be some index such that $E_l$ is the fixed field in $k^{\sep}$ of the subgroup 
of $\Gabs_k$ fixing $i$ {\it via} the action $\mu\circ \varphi$. 
For $j=\nu(\omega)(\sigma (i))$ (with $\sigma$ given by condition (ii-2)), we have

$$\begin{matrix}
(\nu \circ \phi \circ {\sf s}_{t_0}) (\tau) (j) & = \nu(\omega) \hskip 2pt (\nu \circ \chi \circ \varphi)(\tau) \hskip 2pt (\sigma (i)) \hfill \\ 
& =\nu(\omega) \hskip 2pt ({\rm conj}(\sigma) \circ \mu \circ \varphi)(\tau) \hskip 2pt (\sigma (i)) \hfill \\
& =\nu(\omega) \hskip 3pt \sigma \hskip 3pt  (\mu \circ \varphi)(\tau) \hskip 1pt (i) \hfill \\

\end{matrix}$$

\noindent
and so $j$ is fixed by $(\nu \circ \phi \circ {\sf s}_{t_0}) (\tau)$ if and only if $i$ is fixed by $(\mu \circ \varphi)(\tau)$. Conclude that the specialization $k(X)_{t_0,j}$ is the field $E_l$. $\square$

%


%

\penalty -3000
\section{Applications} \label{sec:varying_field}

\subsection{PAC fields} \label{ssec:field_PAC}
Recall that a field $k$ is said to be PAC if every non-empty geometrically 
irreducible $k$-variety has a Zariski-dense set of $k$-rational points. 
If $k$ is PAC, the twisting lemma leads to the following statements in the two standard 
situations (b) and (c) from \S \ref{remark:twisting_general}
(the standard situation (a) corresponds to corollary {{3.1}} from \cite{DeLe1}). Similar
applications over PAC fields can also be found in Bary-Soroker's works 
\cite{Bary-Soroker_irreducible} \cite{Bary-Soroker_Dirichlet}.

\begin{corollary} \label{cor:PAC1}
Let $k$ be a PAC field, $f:X\rightarrow B$ be a $k$-{\rm G}-Galois cover of group $G$ and 
geo\-me\-tric monodromy group $\overline G$, 
and let $E/k$ be a Galois extension of group $H\subset G$. Assume that condition 
{\rm (const/comp)} from \S \ref{ssec:Galois_covers} holds
and ${\rm Out}(G/\overline G)=\{1\}$.
Then $E/k$ is {\rm the} specialization $k(X)_{t_0}/k$ of $f$ at each point $t_0$ in a 
Zariski-dense subset of $B(k)\setminus D$.
\end{corollary}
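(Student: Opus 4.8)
The plan is to combine the Galois form of the twisting lemma (Lemma \ref{prop:general_twisted_cover}) with the defining property of PAC fields. Here $f=g$ is itself a Galois cover and $E/k$ is a single Galois extension with group $H$, so we take $N=E$, and the assertion ``$E/k$ is \emph{the} specialization $k(X)_{t_0}/k$'' is precisely condition (iii) of Lemma \ref{prop:general_twisted_cover}. Under hypothesis (const/comp), that lemma gives the equivalence (iii) $\Leftrightarrow$ (v): the specialization at $t_0$ equals $E/k$ if and only if, for some $\gamma\in\Gamma$, there is an unramified $k$-rational point of the twisted cover $\widetilde g^{\chi_\gamma \varphi}:\widetilde Z^{\chi_\gamma \varphi}\rightarrow B$ lying above $t_0$. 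Thus it suffices to exhibit a Zariski-dense set of such points $t_0$, and this is exactly where the PAC property enters.

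First I would verify that $\Gamma\neq\emptyset$, so that at least one twisted cover is actually available. This is guaranteed by the hypothesis ${\rm Out}(G/\overline G)=\{1\}$ through Remark \ref{rem:after_twisting_non-regular_galois}(a). Fixing one $\gamma\in\Gamma$, I recall from \S \ref{ssec:twisting} that $\widetilde g^{\chi_\gamma \varphi}:\widetilde Z^{\chi_\gamma \varphi}\rightarrow B$ is a \emph{regular} $k$-cover, so $\widetilde Z^{\chi_\gamma \varphi}$ is geometrically irreducible. Removing the proper closed preimage $(\widetilde g^{\chi_\gamma \varphi})^{-1}(D)$ leaves a non-empty open subvariety $V$, which is again a geometrically irreducible $k$-variety. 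Since $k$ is PAC, $V(k)$ is Zariski-dense in $\widetilde Z^{\chi_\gamma \varphi}$.

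Finally I would transport this density down to $B$. Every $x_0\in V(k)$ has image $t_0=\widetilde g^{\chi_\gamma \varphi}(x_0)\in B(k)\setminus D$, and by construction condition (i) of Lemma \ref{prop:general_twisted_cover} holds for $\chi=\chi_\gamma$; hence condition (v) holds and therefore $k(X)_{t_0}/k=E/k$. So $\widetilde g^{\chi_\gamma \varphi}(V(k))$ is contained in the set of points $t_0\in B(k)\setminus D$ with the desired specialization. Because $\widetilde g^{\chi_\gamma \varphi}$ is a finite, dominant, hence surjective $k$-cover, the closure of $\widetilde g^{\chi_\gamma \varphi}(V(k))$ contains $\widetilde g^{\chi_\gamma \varphi}(\overline{V(k)})=\widetilde g^{\chi_\gamma \varphi}(\widetilde Z^{\chi_\gamma \varphi})=B$; that is, the image of the Zariski-dense set $V(k)$ is Zariski-dense in $B$, which yields the claimed Zariski-dense subset of $B(k)\setminus D$.

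The genuine content is carried entirely by the twisting lemma, which converts the arithmetic statement about specializations into the existence of rational points on the geometrically irreducible variety $\widetilde Z^{\chi_\gamma \varphi}$; over a PAC field such points come for free, so there is no diophantine obstacle. The one step that deserves care, and the reason the hypothesis ${\rm Out}(G/\overline G)=\{1\}$ is imposed, is the non-emptiness of $\Gamma$: without some admissible $\chi_\gamma$ no twisted cover exists and conditions (iii)-(v) would simply fail. Once $\Gamma\neq\emptyset$ is secured, the geometric irreducibility of $\widetilde Z^{\chi_\gamma \varphi}$ and the density transfer along the cover are routine.
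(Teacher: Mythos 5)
Your proof is correct and follows essentially the same route as the paper's own argument: secure $\Gamma\neq\emptyset$ via Remark \ref{rem:after_twisting_non-regular_galois}(a), use the PAC property to get a Zariski-dense set of $k$-rational points on the (regular, hence geometrically irreducible) twisted cover $\widetilde Z^{\chi_\gamma\varphi}$, and push it down through $\widetilde g^{\chi_\gamma\varphi}$ using the equivalence (iii) $\Leftrightarrow$ (v) of Lemma \ref{prop:general_twisted_cover}. Your write-up merely makes explicit two points the paper leaves implicit, namely the geometric irreducibility justifying the PAC step and the density transfer along the finite surjective map.
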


The special case $G=\overline G$ corresponds to theorem 3.2 of \cite{DeBB} (which proved the 
Beckmann-Black conjecture over PAC fields). 

\begin{proof} Assumption ${\rm Out}(G/\overline G)=\{1\}$ assures that $\Gamma \not= \emptyset$ (remark \ref{rem:after_twisting_non-regular_galois} (a)).
Pick $\gamma \in \Gamma$. Since $k$ is PAC, the variety $\widetilde Z^{\chi_\gamma \varphi}$ has a Zarisi-dense set ${\mathcal Z}$ of $k$-rational points. From lemma \ref{prop:general_twisted_cover}, the Zariski-dense subset  $\widetilde g^{\chi_\gamma \varphi}({\mathcal Z})\setminus D \subset B(k)\setminus D$ satisfies the announced conclusion.
\end{proof}

\begin{corollary} \label{cor:PAC2}
Let $k$ be a PAC field, $f:X\rightarrow B$ be a 
degree $n$ $k$-cover 
and let $1^{\beta_1} \cdots n^{\beta_n}$ be the type of some element of the
monodromy group $G$ in the Galois representation $\nu:G\rightarrow S_n$
of $k(X)/k(B)$. Let $\prod_l E_l/k$ be an \'etale algebra such that

\noindent
- the divisor of all degrees $[E_l:k]$ is $1^{\beta_1} \cdots n^{\beta_n}$,  

\noindent
- condition {\rm (const/comp)} holds,

\noindent
- the compositum $N/k$ of the Galois closures of the extensions $E_l/k$ 
is a cyclic extension of order ${\rm ppcm}\{\hskip 2pt i\hskip 2pt|\hskip 2pt \beta_i \not=0\}$.

\noindent
Then $\prod_l E_l/k$ is 
the collection $\prod_l k(X)_{t_0,l}/k$ of specializations of $f$ at each 
point $t_0$ in a Zariski-dense subset of $B(k)\setminus D$.
\end{corollary}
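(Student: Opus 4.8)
The plan is to verify that the three hypotheses place us in the standard situation (c) of \S\ref{remark:twisting_general}, so that the group-theoretical condition (ii-2) of the general twisting lemma \ref{prop:twisted cover} is automatically satisfied, and then to invoke the PAC property exactly as in the proof of corollary \ref{cor:PAC1}.

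First I would fix the embedding of $H$ into $G$. The monodromy representation $\nu:G\rightarrow S_n$ is faithful: since $k(Z)$ is the Galois closure of $k(X)/k(B)$, the core of $\Gal(k(Z)/k(X))$ in $G$ is trivial, and this core is exactly $\ker(\nu)$. Let $\omega\in G$ be an element with $\nu(\omega)$ of type $1^{\beta_1}\cdots n^{\beta_n}$, as provided by hypothesis. By faithfulness the order of $\omega$ equals the order of $\nu(\omega)$, namely ${\rm ppcm}\{\,i\mid \beta_i\not=0\,\}$, which is precisely the order of the cyclic group $H=\Gal(N/k)$. Thus $H$ and $\langle\omega\rangle$ are cyclic of the same order; I would use an isomorphism between them as the chosen embedding $H\subset G$ from \S\ref{ssec:twisting-lemma-general-form}, so that $H$ is generated by $\omega$ inside $G$.

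Second I would check that we are then in situation (c). Condition {\rm (const/comp)} is assumed, and by the previous step $\nu(\omega)$ has type equal to the divisor $1^{\beta_1}\cdots n^{\beta_n}$ of all degrees $[E_l:k]$. The one point to verify carefully is that $\nu(\omega)$ and $\mu(\omega)$ share the same type: since $H=\langle\omega\rangle$ is cyclic, each coset space $H/\Gal(N/E_l)$ has size $[E_l:k]$ and $\omega$ acts on it as a single cycle, so $\mu(\omega)$ has cycle type $\prod_l\,([E_l:k])^{1}=1^{\beta_1}\cdots n^{\beta_n}$, matching $\nu(\omega)$. The discussion in \S\ref{remark:twisting_general}(c) then yields $\Gamma\not=\emptyset$ and produces, for a suitable $a$ prime to $|H|$, an automorphism $\chi_a:H\rightarrow H$ inducing $\overline\chi$ modulo $\overline G$ for which $\nu\circ\chi_a$ and $\mu$ are conjugate in $S_n$; that is condition (ii-2) of lemma \ref{prop:twisted cover}, realized by some representative $\chi_\gamma$ with $\gamma\in\Gamma$.

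Finally I would reproduce the PAC argument of corollary \ref{cor:PAC1}. Fix the $\gamma\in\Gamma$ just produced. The twisted cover $\widetilde g^{\chi_\gamma\varphi}:\widetilde Z^{\chi_\gamma\varphi}\rightarrow B$ is a regular, hence geometrically irreducible, $k$-cover (\S\ref{ssec:twisting}), so as $k$ is PAC the variety $\widetilde Z^{\chi_\gamma\varphi}$ has a Zariski-dense set $\mathcal Z$ of $k$-rational points. Its image $\widetilde g^{\chi_\gamma\varphi}(\mathcal Z)\setminus D$ is a Zariski-dense subset of $B(k)\setminus D$, the morphism being finite and dominant. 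For each $t_0$ in this subset, condition (ii-1) holds by choosing a point $x_0\in\mathcal Z$ above $t_0$, while (ii-2) has just been established; hence condition (ii) of lemma \ref{prop:twisted cover} is met, and by that lemma condition (i) holds, i.e. $\prod_l E_l/k$ is the collection $\prod_l k(X)_{t_0,l}/k$ of specializations of $f$ at $t_0$. The main obstacle is the verification in the second and third steps that we genuinely land in situation (c)---in particular the order-matching afforded by the faithfulness of $\nu$ together with the equality of cycle types of $\nu(\omega)$ and $\mu(\omega)$---after which the PAC conclusion is a routine transcription.
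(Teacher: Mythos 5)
Your proof is correct and takes essentially the same route as the paper's: identify $H=\Gal(N/k)$ with the cyclic subgroup $\langle\omega\rangle\subset G$, observe that the hypotheses place you in the ``cyclic specializations'' situation (c) of \S\ref{remark:twisting_general} so that condition (ii-2) of lemma \ref{prop:twisted cover} holds for some $\chi_\gamma$, then use the PAC property to obtain (ii-1) on a Zariski-dense subset of $B(k)\setminus D$ and conclude by (ii) $\Rightarrow$ (i). The details you supply beyond the paper's one-line reduction --- faithfulness of $\nu$ so that the order of $\omega$ matches $|H|={\rm ppcm}\{\hskip 1pt i\hskip 1pt|\hskip 1pt \beta_i\not=0\}$, and the fact that $\omega$ acts as a single cycle on each coset space $H/\Gal(N/E_l)$ so that $\mu(\omega)$ and $\nu(\omega)$ share the same type --- are exactly what the paper leaves implicit.
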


A useful special case is for $1^{\beta_1} \cdots n^{\beta_n} = n^1$: it can then be concluded that $f:X\rightarrow B$ specializes to some degree $n$ field extension at each $t_0$ in a Zariski-dense 
subset of $B(k)\setminus D$ ({\it i.e.} the Hilbert irreducibility conclusion) under the assumptions
that there is a $n$-cycle in $\nu(G)$ and $k$ has a degree $n$ cyclic extension satisfying 
condition {\rm (const/comp)}. This can be compared to \cite[corollary 1.4]{Bary-Soroker_Dirichlet} 
(and \cite[corollary {{3.1}}]{DeLe1}) which has the same Hilbert conclusion under
the assumptions that $G = \overline G = S_n$ and $k$ has a degree $n$ separable extension.

\begin{proof} Let $\omega \in G$ with $\nu(\omega)$ of type $1^{\beta_1} \cdots n^{\beta_n}$. Identify the Galois group $H=\Gal(N/k)$ with the subgroup $\langle \omega \rangle \subset G$. We are in the standard situation (c) from \S \ref{remark:twisting_general} and so condition (ii-2) from lemma
\ref{prop:twisted cover} holds for some isomorphism $\chi_\gamma$ ($\gamma\in \Gamma$). 
Since $k$ is PAC, condition (ii-1) holds for all $t_0$ in a Zariski-dense subset of $B(k)\setminus D$. Therefore condition (i) from lemma \ref{prop:twisted cover} holds as well, thus ending the proof.
\end{proof}

\subsection{Finite fields} \label{ssec:finite_fields}
If $k$ is a suitably large finite field $\Ff_q$, the Lang-Weil estimates can be used to guarantee that 
the twisted covers have $\Ff_q$-rational points. More specifically we have the following result,
where we take $B=\Pp^1$ for simplicity.

\begin{corollary} \label{cor:finite-fields}
Let $f:X\rightarrow \Pp^1$ be a regular $\Ff_q$-cover of degree $n\geq 2$, with $r$ branch points and with geometric monodromy group $S_n$. Let $m_1,\ldots,m_s$ be some positive integers (possibly repeated) such that $\sum_{l=1}^s m_l = n$. Then the number ${\mathcal N}(f,m_1,\ldots,m_s)$ of unramified points $t_0 \in \Ff_q$ such that $\prod_{l=1}^s\Ff_{q^{m_l}}/\Ff_q$ is the collection of specializations of $f$ at $t_0$ can be evaluated as follows:

$$\left|{\mathcal N}(f,m_1,\ldots,m_s) - \frac{(q+1) \hskip 2pt |m_1^1 \cdots m_s^1|}{n!}\right| \leq r \hskip 1pt n! \hskip 1pt\sqrt{q}$$

\noindent
where $|m_1^1 \cdots m_s^1|$ is the number of elements in the conjugacy class in $S_n$ corresponding to the type $m_1^1 \cdots m_s^1$.
\end{corollary}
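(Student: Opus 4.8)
The plan is to combine the twisting lemma (general form, lemma \ref{prop:twisted cover}) with the Lang--Weil estimates for the number of rational points on a variety over a finite field. Since $f:X\rightarrow \Pp^1$ has geometric monodromy group $S_n$, we are in the standard situation (a) of \S \ref{remark:twisting_general}: $G=\overline G = S_n$, so $\nu = \mathrm{Id}_{S_n}$ and condition (const/comp) holds automatically with $\overline\chi$ trivial. The $\Ff_q$-\'etale algebra we target is $\prod_{l=1}^s\Ff_{q^{m_l}}/\Ff_q$, whose compositum of Galois closures $N/\Ff_q$ is $\Ff_{q^M}/\Ff_q$ with $M=\mathrm{ppcm}(m_1,\dots,m_s)$, a \emph{cyclic} extension of group $H=\Zz/M\Zz$. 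By lemma \ref{prop:twisted cover}, counting the points $t_0\in \Pp^1(\Ff_q)\setminus D$ at which $\prod_l\Ff_{q^{m_l}}/\Ff_q$ is the collection of specializations amounts to counting $\Ff_q$-points on the disjoint union $\coprod_{\gamma\in\Gamma'}\widetilde Z^{\chi_\gamma\varphi}$ lying above unramified points, where $\Gamma'\subset\Gamma$ indexes those $\gamma$ for which the group-theoretic condition (ii-2) holds (and the uniqueness clause of the lemma guarantees no double counting of the $t_0$).

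The first step is the group-theoretic bookkeeping. By the uniqueness statement in lemma \ref{prop:twisted cover}, each qualifying $t_0$ corresponds to exactly one $\gamma\in\Gamma$, so $\mathcal N(f,m_1,\dots,m_s)$ equals the number of unramified $\Ff_q$-points on $\coprod_{\gamma\in\Gamma'}\widetilde Z^{\chi_\gamma\varphi}$. Next I would determine the number of relevant twisted covers. Since $\nu = \mathrm{Id}$ and $H=\langle\omega\rangle$ is cyclic with $\mu(\omega)$ of type $m_1^1\cdots m_s^1$, condition (ii-2) for $\chi_\gamma=\chi_a$ (in the notation of \S \ref{remark:twisting_general}(c)) holds precisely when $\chi_a(\omega)=\omega^a$ has the same cycle type as $\mu(\omega)$, i.e.\ when conjugation identifies the two $S_n$-actions. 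Summing over these $\gamma$, the ``expected'' main term should aggregate to the count of permutations in the conjugacy class of type $m_1^1\cdots m_s^1$, which is why $|m_1^1\cdots m_s^1|$ appears; I expect the total number of qualifying twisted covers, weighted correctly, to be exactly $|m_1^1\cdots m_s^1|$.

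The second step is the geometric input. Each $\widetilde Z^{\chi_\gamma\varphi}$ is a regular (geometrically irreducible) $\Ff_q$-cover of $\Pp^1$ of degree $|\overline G|=n!$, so it is a smooth projective geometrically irreducible curve over $\Ff_q$. The Lang--Weil estimate (here really the Hasse--Weil bound for curves) gives $|\,\#\widetilde Z^{\chi_\gamma\varphi}(\Ff_q) - (q+1)\,|\le 2g_\gamma\sqrt q$, where $g_\gamma$ is the genus. Bounding the genus via Riemann--Hurwitz in terms of the degree $n!$ and the number of branch points $r$ of the original cover $f$ yields a bound of the shape $2g_\gamma\le C\, r\, n!$; summing over the $|m_1^1\cdots m_s^1|$ relevant $\gamma$ and subtracting the finitely many ramified points (those above $D$, of which there are at most $r$) should produce the stated estimate $\big|\mathcal N - \tfrac{(q+1)|m_1^1\cdots m_s^1|}{n!}\big|\le r\,n!\sqrt q$. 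The factor $1/n!$ in the main term comes from dividing the fiber count by the degree, since each unramified $\Ff_q$-point of $B$ with the prescribed splitting type is hit by $|\mathrm{Cen}_{\overline G}(\chi(H))|$ points of a given twisted cover (remark \ref{rem:after_twisting_non-regular_galois}(c)), and these centralizer orders reassemble into the conjugacy-class/orbit count.

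The main obstacle will be the precise constant in the error term: one must show the genus bound and the discarded ramified-point contributions combine into exactly $r\,n!\sqrt q$ rather than some larger multiple. This requires care in applying Riemann--Hurwitz to the twisted curves $\widetilde Z^{\chi_\gamma\varphi}$ (whose branch locus is controlled by that of $g:Z\rightarrow B$, hence by the $r$ branch points of $f$) and in verifying that the normalization and the $\Ff_q$-points at infinity or over the branch divisor do not spoil the bound. The fiber-counting via $|\mathrm{Cen}_{\overline G}(\chi(H))|$ and the correct identification of the main term with $|m_1^1\cdots m_s^1|/n!$ is the delicate piece; everything else is a routine assembly of the twisting lemma and the Hasse--Weil bound.
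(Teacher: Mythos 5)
Your overall strategy (twisting lemma in the standard situation $G=\overline G=S_n$, Hasse--Weil on the twisted curve, Riemann--Hurwitz for the genus, the centralizer fiber count of remark \ref{rem:after_twisting_non-regular_galois}(c)) is the paper's, but the central bookkeeping step --- the one you yourself flag as ``the delicate piece'' --- is wrong. You claim that the number of qualifying twisted covers, i.e.\ of classes $\gamma\in\Gamma$ for which condition (ii-2) holds, is $|m_1^1\cdots m_s^1|$. It is exactly $1$. The set $\Gamma$ indexes isomorphisms $\chi:H\rightarrow H'\subset G$ \emph{modulo conjugation by} $\overline G$; here $\overline G=S_n$ and $\nu={\rm Id}$, so (ii-2) says precisely that $\chi_\gamma$ is $S_n$-conjugate to $\mu$, and all such embeddings form a single equivalence class in $\Gamma$ (this is how the paper uses the opening note of \S\ref{remark:twisting_general}). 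The quantity $|m_1^1\cdots m_s^1|$ counts the individual \emph{embeddings} conjugate to $\mu$, not the classes. It enters the main term through the fibers, not through a sum over covers: by remark \ref{rem:after_twisting_non-regular_galois}(c), each qualifying $t_0$ carries exactly $|{\rm Cen}_{S_n}(\chi_0(H))|=n!/|m_1^1\cdots m_s^1|$ rational points of the \emph{single} twisted curve $\widetilde Z^{\chi_0\varphi}$, whence ${\mathcal N}(f,m_1,\ldots,m_s)\approx |\widetilde Z^{\chi_0\varphi}(\Ff_q)|\cdot |m_1^1\cdots m_s^1|/n!\approx (q+1)|m_1^1\cdots m_s^1|/n!$.

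As written, your three accounting claims are mutually inconsistent and do not yield the stated main term: you assert (A) that ${\mathcal N}$ \emph{equals} the number of unramified points on $\coprod_{\gamma\in\Gamma'}\widetilde Z^{\chi_\gamma\varphi}$, (B) that each qualifying $t_0$ is hit by $|{\rm Cen}_{\overline G}(\chi(H))|$ points of the relevant cover, and (C) that $|\Gamma'|=|m_1^1\cdots m_s^1|$. Claims (A) and (B) already contradict each other unless the centralizer is trivial; and combining (B), (C) and your uniqueness assertion (each $t_0$ lies under points of exactly one of the covers) with Hasse--Weil on each of the $|\Gamma'|$ curves gives ${\mathcal N}\approx |\Gamma'|\,(q+1)\,|m_1^1\cdots m_s^1|/n! = (q+1)|m_1^1\cdots m_s^1|^2/n!$, off by a factor of the class size; the numbers only come out right with $|\Gamma'|=1$. (There is a consistent variant of your picture using the $|m_1^1\cdots m_s^1|$ pairwise-isomorphic twisted covers attached to the individual embeddings rather than to classes, but then every qualifying $t_0$ has points above it on \emph{all} of them, so your ``no double counting'' clause fails in that bookkeeping.) The geometric half of your argument is fine and matches the paper: $\widetilde Z^{\chi_0\varphi}$ is geometrically irreducible, it has the same genus as $Z$ (being $k^{\sep}$-isomorphic to it), Riemann--Hurwitz gives ${\rm g}\le (r-2)(n!-1)/2$, at most $r(n!-1)$ ramified points must be discarded, and the bound $|m_1^1\cdots m_s^1|\le n(n-2)!$ cleans up the constants to give the error term $r\,n!\,\sqrt q$ --- but all of this only assembles correctly once the counting over a single twisted cover is repaired.
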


This extends similar estimates that have appeared in the literature for Galois covers under the name of  Tchebotarev theorem for function fields over finite fields. See \cite{Weil_hermann}, \cite{fried-hilbert} 
\cite{ekedahl}, \cite[\S 6]{FrJa}, and also  \cite[corollary 3.5]{DEGha2} where the Galois analog of corollary \ref{cor:finite-fields} is obtained as the outcome of our approach in the standard situation
\S \ref{remark:twisting_general} (b). 

For the type $m_1^1 \cdots m_s^1=n^1$ of $n$-cycles, we obtain that the number ${\mathcal N}(f,n)$ is asymptotic to $q/n$ when $q\rightarrow +\infty$. For example if $f:X\rightarrow \Pp^1$ over $\Ff_p$ is given by the trinomial  $Y^n+Y-T$ (which satisfies the assumptions of corollary \ref{cor:finite-fields} if $p\not| n(n-1)$ \cite[\S 4.4]{Serre-topics}), 
the number of irreducible trinomials $Y^n+Y+a\in \Ff_p[Y]$ 
realizing the extension $\Ff_{p^n}/\Ff_p$ is asymptotic to $p/n$ as $p\rightarrow \infty$,  a result  due  to Cohen \cite{cohen} and Ree \cite{ree} proving a conjecture of Chowla \cite{chowla}.

\begin{proof} 
We are in the standard situation $G=\overline G = S_n$. Condition (const/comp) trivially holds. Furthermore, it follows from the beginning note of \S \ref{remark:twisting_general} that the number of 
$\gamma \in \Gamma$ for which condition {\rm (ii-2)} holds is $1$; denote by $\chi_{0}$ 
the corresponding isomorphism. From lemma \ref{prop:twisted cover}, the set of unramified $\Ff_q$-rational points on the twisted variety $\widetilde Z^{\chi_{0} \varphi}$ maps {\it via} the cover $\widetilde g^{\chi_{0} \varphi}: \widetilde Z^{\chi_{0} \varphi}\rightarrow \Pp^1$ to the set of points $t_0\in \Pp^1(\Ff_q)$ satisfying the desired conclusion.  Using remark \ref{rem:after_twisting_non-regular_galois} (c), we obtain

$$0 \leq \frac{ | \widetilde Z^{\chi_{0} \varphi}(\Ff_q)|  }{|{\rm Cen}_{S_n}(\chi_{0}(H))|} -  {\mathcal N}(f,m_1,\ldots,m_s)  \leq \frac{ r(n!-1)  }{|{\rm Cen}_{S_n}(\chi_{0}(H))|}$$

\noindent
where $H=\Gal(\Ff_{q^M}/\Ff_q)$ with $M= {\rm ppcm}(m_1,\ldots,m_s)$ and the term $(r(n!-1))$ is an upper bound for the number of ramified points on $\widetilde Z^{\chi_{0} \varphi}$. Also note that $\widetilde g^{\chi_{0} \varphi}$ and $g$ being isomorphic over $k^\sep$, they have the same branch point number, which is the branch point number $r$ of $f$, and that the curves $\widetilde Z^{\chi_{0} \varphi}$ and $Z$ have the same genus, say ${\rm g}$.

The cyclic subgroup  $\chi_{0}(H)\subset S_n$ is generated by a permutation of type $m_1^1\cdots m_s^1$ (condition (ii-2) from lemma \ref{prop:twisted cover}). Hence we have $|{\rm Cen}_{S_n}(\chi_{0}(H))| = n! / |m_1^1 \cdots m_s^1|$. The Lang-Weil estimates give:

$$|\hskip 2pt | \widetilde Z^{\chi_{0} \varphi}(\Ff_q)| - (q+1) | \leq 2 {\rm g} \sqrt{q}$$ 

\noindent 
The Riemann-Hurwitz formula yields ${\rm g} \leq (r-2)(n!-1)/2$. The announced estimate easily follows.
(We use that the largest cardinality of a conjugacy class in $S_n$ is $n(n-2)!$, {\it i.e.}, that of the class of $n-1$-cycles; this enables us to write that $(n-1)/|{\rm Cen}_{S_n}(\chi_{0}(H))|\leq 1$).
%
\end{proof}

\subsection{Number fields} \label{ssec:number_fields}
Over number fields, we will follow a local-global approach as in \cite{DeLe1} and 
\cite{DEGha}. We start with a local result at one prime. We give two versions: a {\it mere version} for a cover $f:X\rightarrow \Pp^1$ and a G{\it -version} for a G-Galois cover $g:Z\rightarrow \Pp^1$. 


For the next two statements, let $k$ be a number field, $f:X\rightarrow \Pp^1$ be a degree $n$ regular $k$-cover, $r$ be the branch point number, $G$ (resp. $\overline G$) be the monodromy group (resp. geometric monodromy group), $g:Z\rightarrow \Pp^1$ be the Galois closure of $f$, $\nu:G\rightarrow S_n$ be 
the Galois representation of $k(X)/k(T)$ (relative to $k(Z)$) and $\widehat k_g/k$ be the constant  extension in $g$. A prime number $p$ is said to be {\it bad} if it is one from the finite list of primes for which the branch divisor is not \'etale or there is vertical ramification at $p$ \cite{DEGha}, it is said to be {\it good} otherwise.



\begin{corollary} \label{cor:fried} 
Suppose given
\vskip 0,8mm

\noindent
{\rm (in the mere version):} the type $1^{\beta_1} \cdots n^{\beta_n}$ of an element of $\nu(\overline G)\subset S_n$, 
\vskip 0,8mm

\noindent
{\rm (in the G-version):}  an element $\omega\in \overline G$.
\vskip 0,8mm

\noindent
Then for each prime $p\geq 4r^2 (n!)^2$, good and totally split in $\widehat k_g/\Qq$, there exists an integer $b_p\in \Zz$ such that for each integer $t_0\equiv b_p$ {\rm mod $p$}, 
\vskip 0,8mm

\noindent
{\rm (mere version)}
the collection of specializations of $f\otimes_k \Qq_p$ at $t_0$ is an \'etale algebra $\prod_l E_l/\Qq_p$ with degree divisor $ \prod_l [E_l:\Qq_p]^1= 1^{\beta_1} \cdots n^{\beta_n}$,
\vskip 1,5mm

\noindent
{\rm (G-version)}  the specialization of the $\Qq_p$-{\rm G}-Galois cover $g\otimes_k \Qq_p$ at $t_0$ is the unramified extension $N_p/\Qq_p$ of degree $|\langle \omega \rangle|$.

\end{corollary}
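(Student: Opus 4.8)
The plan is to prove the G-version and deduce the mere version from it by composing with $\nu$; in both cases the argument reduces, via the twisting lemma, to producing a $\Qq_p$-rational point on a single twisted cover, and these points are manufactured by reduction modulo $p$ and the Weil estimate, exactly in the spirit of corollary \ref{cor:finite-fields}. First I fix a prime $\mathfrak p$ of $\widehat k_g$ above $p$; since $p$ is totally split in $\widehat k_g/\Qq$, the completion is $\Qq_p$ and the constant extension $\widehat k_g$ trivializes over $\Qq_p$. Consequently $g\otimes_k\Qq_p$ is a regular Galois $\Qq_p$-cover whose arithmetic monodromy group coincides with the geometric monodromy group $\overline G$, and condition (const/comp) holds trivially with $\overline \chi = {\rm Id}$.

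For the G-version, set $H=\langle \omega\rangle\subset \overline G$, let $N_p/\Qq_p$ be the unramified extension of degree $|\langle \omega\rangle|$, and let $\varphi:\Gabs_{\Qq_p}\rightarrow H$ be its G-Galois representation, which sends the Frobenius to $\omega$. Take $\chi={\rm Id}_H\in {\rm Isom}_{\overline \chi}(H,H)$. By the Galois form of the twisting lemma \ref{prop:general_twisted_cover}, for an unramified $t_0\in \Pp^1(\Qq_p)$ the specialization of $g\otimes_k\Qq_p$ at $t_0$ equals $N_p/\Qq_p$ if and only if the twisted cover $\widetilde Z^{\chi \varphi}$ carries a $\Qq_p$-rational point above $t_0$. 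Since $\widetilde g^{\chi \varphi}$ is $\overline k$-isomorphic to $g$, it has the same branch point number $r$ and the same genus ${\rm g}$ as $Z$, and, the twist being by the unramified representation $\varphi$, good reduction at the good prime $p$. Reducing modulo $\mathfrak p$ gives a smooth $\Ff_p$-curve, and the Weil bound together with the estimates $2{\rm g}\leq (r-2)(n!-1)$ and (number of ramified points) $\leq r(n!-1)$ from the proof of corollary \ref{cor:finite-fields} shows that the hypothesis $p\geq 4r^2(n!)^2$ forces the count of unramified $\Ff_p$-rational points to be positive. Pick such a point $\bar x_0$, lying above some $\bar t_0\in \Pp^1(\Ff_p)$, and let $b_p\in \Zz$ be a lift of $\bar t_0$.

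For every $t_0\in \Pp^1(\Qq_p)$ with $t_0\equiv b_p$ mod $p$, Hensel's lemma lifts $\bar x_0$ to a $\Qq_p$-rational point of $\widetilde Z^{\chi \varphi}$ above $t_0$, so by the twisting lemma the specialization of $g\otimes_k\Qq_p$ at $t_0$ is $N_p/\Qq_p$; this proves the G-version. For the mere version, choose $\omega\in \overline G$ with $\nu(\omega)$ of the prescribed type $1^{\beta_1}\cdots n^{\beta_n}$ and apply $\nu$ to the specialization representation of $g$ at such a $t_0$: the collection of specializations of $f\otimes_k\Qq_p$ is then the unramified $\Qq_p$-\'etale algebra $\prod_l E_l/\Qq_p$ whose factors correspond to the orbits of $\langle \nu(\omega)\rangle$ on $\{1,\ldots,n\}$, and hence has degree divisor $\prod_l [E_l:\Qq_p]^1$ equal to the cycle type $1^{\beta_1}\cdots n^{\beta_n}$ of $\nu(\omega)$.

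The main obstacle is the good-reduction step. One must check that twisting $g$ by the unramified representation $\varphi$ does not disturb good reduction at $p$, so that $\widetilde Z^{\chi \varphi}$ genuinely reduces to a smooth $\Ff_p$-curve of genus ${\rm g}$; and, crucially, that for a cover of good reduction an unramified specialization depends only on $t_0$ mod $p$, being read off from the action of Frobenius on the reduced fiber. It is this last fact that upgrades the existence of a single $\Ff_p$-point to the desired conclusion for the entire residue class $b_p$, and it is precisely why $p$ is assumed good and totally split in $\widehat k_g/\Qq$. By contrast, the genus and ramification bounds feeding the Weil estimate are routine, duplicating those already used in corollary \ref{cor:finite-fields}.
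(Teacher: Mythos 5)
Your proposal is correct, and its diophantine core is exactly the paper's: total splitness of $p$ in $\widehat k_g/\Qq$ makes the cover regular over $\Qq_p$ (so (const/comp) holds trivially with $\overline \chi = {\rm Id}$), one twists by the unramified cyclic representation $\varphi$, good reduction at good primes plus the Weil/Lang--Weil bound under $p\geq 4r^2(n!)^2$ produces an unramified $\Ff_p$-point, and Hensel's lemma promotes it to $\Qq_p$-points above the whole residue class of $b_p$ (the good-reduction step is likewise deferred to \cite{DEGha} by the paper, so your flagging it rather than proving it matches the original). Where you differ is the decomposition: the paper proves the mere version \emph{directly}, applying the general twisting lemma \ref{prop:twisted cover} to $f\otimes_k\Qq_p$ and the \'etale algebra $\prod_i (E^{p,i}/\Qq_p)^{\beta_i}$ in the ``cyclic specializations'' situation (c) of \S \ref{remark:twisting_general}, and then notes the G-version is ``very similar'' via the Galois form; you instead prove only the G-version via lemma \ref{prop:general_twisted_cover} and deduce the mere version by composing with $\nu$ and reading orbit sizes off the cycle type. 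Your route buys economy --- only the Galois form of the twisting lemma is needed, and in effect you inline the orbit-counting argument that the paper packaged into the proof of lemma \ref{prop:twisted cover} --- while the paper's route keeps the two versions independent and illustrates the general lemma (whose situation (c) subtleties collapse here anyway, since regularity over $\Qq_p$ lets one take $\chi={\rm Id}$).

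Two precision points, neither fatal. First, your deduction of the mere version uses more than the \emph{statement} of the G-version: knowing only that the specialization field is $N_p$ does not pin down the image of Frobenius, so you must invoke condition (ii) of lemma \ref{prop:general_twisted_cover} (with $\chi={\rm Id}_H$), which says the specialization representation of $g\otimes_k\Qq_p$ at $t_0$ is $\tau\mapsto \omega'\varphi(\tau)\omega'^{-1}$ for some $\omega'\in\overline G$; then Frobenius maps to a conjugate of $\omega$, the representation $\nu\circ\phi\circ{\sf s}_{t_0}$ is unramified, and its orbits are the cycles of a conjugate of $\nu(\omega)$, giving the degree divisor $1^{\beta_1}\cdots n^{\beta_n}$. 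Your write-up leans on this implicitly; it should be said. Second, your ``if and only if'' linking the specialization to points on the single twisted cover $\widetilde Z^{{\rm Id}\,\varphi}$ overstates lemma \ref{prop:general_twisted_cover}(b): the converse direction would require the whole family $\{\chi_\gamma\}_{\gamma\in\Gamma}$, but only the ``if'' direction is used, so the argument stands.
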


%




The mere version generalizes theorem 4 from \cite{fried-hilbert}: if $\nu(\overline G)$ contains an $n$-cycle, then, for $1^{\beta_1} \cdots n^{\beta_n} = n^1$, the conclusion, stated as in \cite{fried-hilbert} in the situation $f$ is given by a polynomial $P(T,Y)$, is that $P(t_0,Y)$ is irreducible in $\Qq_p[Y]$, and so in $k[Y]$ too.

\begin{proof} Consider first the mere version. Let $p$ be a totally split prime in the extension $\widehat k_g/\Qq$ (infinitely many such primes exist from the Tchebotarev density theorem). In particular  $\Qq_p \widehat k_g=\Qq_p$. For each $i=1,\ldots,n$ with $\beta_i>0$, let $E^{p,i}/\Qq_p$ be the unique unramified extension of $\Qq_p$ of degree $i$. 
Here we use the twisting lemma \ref{prop:twisted cover} in the ``cyclic specializations'' standard 
situation (c) from \S \ref{remark:twisting_general}; we apply it to the cover $f\otimes_k \Qq_p$ and the $\Qq_p$-\'etale algebra $\prod_{i} (E^{p,i}/\Qq_p)^{\beta_i}$,
where the exponent $\beta_i$ indicates that the extension $E^{p,i}/\Qq_p$ appears $\beta_i$ times. 
Condition (const/comp) holds by definition of $\widehat k_g$ and condition (ii-2) from lemma \ref{prop:twisted cover} holds for some isomorphism $\chi_\gamma$, $\gamma \in \Gamma$ (\S \ref{remark:twisting_general} (c)).
If $p$ is a good prime, the twisted curve $\widetilde Z^{\chi_\gamma \varphi} \otimes_k \Qq_p$ 
has good reduction, and the Lang-Weil estimates then show that if $p\geq 4r^2 (n!)^2$, the special fiber has at least one unramified $\Ff_p$-rational point; see \cite{DEGha} for more details. From Hensel's lemma, such a $\Ff_p$-rational point lifts to a $\Qq_p$-rational point on $\widetilde Z^{\chi_\gamma \varphi}$. Conclude with lemma \ref{prop:twisted cover} that the \'etale algebra $\prod_{i} (E^{p,i}/\Qq_p)^{\beta_i}$ is the collection of specializations of $f\otimes_k \Qq_p$  at each point $t_0$ in a coset of $\Zz_p$ modulo $p\Zz_p$. 


The G-version is very similar, but it is the Galois form of the twisting lemma (lemma \ref{prop:general_twisted_cover}) that should be applied, to the regular $\Qq_p$-G-Galois cover $g\otimes_k \Qq_p$ and the unramified extension of $\Qq_p$ of degree $|\langle \omega\rangle|$.
\end{proof}

Corollary \ref{cor:fried} can be used simultaneously for several types of elements in $\nu(\overline G)\subset S_n$ and for several elements of $\overline G$. The weak approximation property of $\Pp^1$ then provides arithmetic progressions $(am+b)_{m\in \Zz} \subset \Zz$ with ratio $a$ the product of several corresponding primes. In particular by using all non-trivial elements of $\overline G$, it can be  guaranteed that the specialization at $am+b$ (for every $m\in \Zz$) of the $\widehat k_g$-G-Galois cover $g\otimes_k\widehat k_g$ be a Galois extension of group $\overline G$; this uses a standard argument (recalled in  \cite[\S 3.4]{DEGha}) based on a  lemma of Jordan. This implies that the specialization at $am+b$ of the $k$-G-Galois cover $g$ is a Galois extension of group a subgroup of $G$ containing $\overline G$. As 
the $k$-cover $f:X\rightarrow \Pp^1$ is assumed to be regular (and so $\nu(\overline G)$ is a transitive subgroup of $S_n$), it follows that the collection of specializations at $am+b$ of the $k$-cover $f$ is a
single field extension of degree $n$, {\it i.e.} Hilbert's conclusion holds at $am+b$ (for every $m\in \Zz$). 

We obtain the following statement, which generalizes \cite[corollary 4.1]{DeLe1} to arbitrary regular covers. 

The constants however are not as good as in the ``$G=\overline G=S_n$'' situation of \cite{DeLe1} because of the preliminary condition on the primes $p$ that uses the Tchebotarev 
theorem.

\begin{corollary} \label{cor:effective} 
There exist integers 
$m_0, \beta>0$ depending on $f$ such that the following holds.
Let ${\mathcal S}$ be a finite set of primes $p>m_0$, good and totally split in $\widehat k_g/\Qq$, 
each given with  positive integers $d_{p,1}\ldots, d_{p,s_p}$ (possibly repeated) such that 
$d_{p,1}^1\cdots d_{p,s_p}^1$ is the type of some element in $\nu(\overline G)$.
Then there exists an integer $b\in \Zz$ such that 
\vskip 1,2mm

\noindent
{\rm (i)} $0\leq b\leq \beta \prod_{p\in {\mathcal S}} p$,  

\vskip 1,2mm

\noindent
{\rm (ii)} for each integer $t_0\equiv b$ {\rm mod $(\beta \prod_{p\in {\mathcal S}} p)$} , $t_0$ is not a branch point of $f$ and the collection of specializations of $f$ at $t_0$ is a single degree $n$ field extension with residue degrees $d_{p,1}\ldots, d_{p,s_p}$ at $p$ for each $p\in {\mathcal S}$.
\end{corollary}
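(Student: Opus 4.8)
The plan is to assemble Corollary \ref{cor:effective} from the single-prime local result Corollary \ref{cor:fried} by combining the local data at the finitely many primes $p\in\mathcal{S}$ via weak approximation, and then upgrading the resulting local specialization properties to a global Hilbert conclusion using the Jordan-lemma argument sketched in the paragraph preceding the statement. First I would fix the constants: set $m_0 = 4r^2(n!)^2$ so that every $p\in\mathcal{S}$ satisfies the hypothesis $p\geq 4r^2(n!)^2$ of Corollary \ref{cor:fried}, and I would take the ratio-constant $\beta$ to be an integer divisible by all the ``bad'' auxiliary data --- in particular the product of the bad primes of $f$ and whatever fixed modulus is needed to run the Jordan argument (i.e. to force the specialized $\widehat k_g$-G-Galois cover to have full geometric monodromy group $\overline G$). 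The integers $m_0,\beta$ depend only on $f$, as required.

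Next I would apply Corollary \ref{cor:fried} (mere version) at each $p\in\mathcal{S}$ to the prescribed type $d_{p,1}^1\cdots d_{p,s_p}^1$, obtaining for each such $p$ a residue $b_p\in\Zz$ modulo $p$ such that every $t_0\equiv b_p \bmod p$ gives an unramified specialization of $f\otimes_k\Qq_p$ whose collection of local specializations has exactly the degree divisor $d_{p,1}^1\cdots d_{p,s_p}^1$; simultaneously I would invoke the G-version to pin down, at auxiliary primes, local conditions forcing the specialized Galois cover to realize each non-trivial element of $\overline G$ as a Frobenius. Then weak approximation on $\Pp^1$ (equivalently, the Chinese Remainder Theorem applied to these coprime congruence conditions modulo the distinct primes $p$ and modulo the fixed factors of $\beta$) produces a single integer $b$ with $0\leq b\leq \beta\prod_{p\in\mathcal{S}}p$ that satisfies all the chosen congruences at once. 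This gives conclusion (i) directly, and gives half of (ii): for every $t_0\equiv b \bmod (\beta\prod_{p\in\mathcal{S}}p)$, the point $t_0$ avoids the branch locus and has the prescribed residue degrees $d_{p,1},\ldots,d_{p,s_p}$ at each $p\in\mathcal{S}$.

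The remaining --- and I expect hardest --- step is to show that, for such $t_0$, the collection of specializations of $f$ is a \emph{single} degree $n$ field extension rather than a genuinely composite \'etale algebra. This is exactly the Hilbert irreducibility part, and it does not follow from the local conditions at $\mathcal{S}$ alone; it is forced by the auxiliary Jordan argument. The key point is that by imposing, through the extra factors built into $\beta$, that the specialization at $t_0$ of the $\widehat k_g$-G-Galois cover $g\otimes_k\widehat k_g$ realizes every non-trivial element of $\overline G$ as a Frobenius at the corresponding auxiliary prime, one concludes via the lemma of Jordan (a transitive subgroup of $S_n$ all of whose elements it covers cannot avoid being the whole group on the relevant orbit) that the specialized Galois group over $\widehat k_g$ is all of $\overline G$. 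Since $f$ is regular, $\nu(\overline G)$ is transitive on $\{1,\dots,n\}$, so the specialization group acts transitively, which is precisely the statement that the specialization is a single degree $n$ field extension. Combining this transitivity conclusion with the prescribed local residue degrees from the first part yields conclusion (ii) in full.

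The main obstacle, then, is managing the interaction between the two families of congruence conditions: the ``mere'' conditions at $\mathcal{S}$ that control residue degrees, and the ``G'' conditions at auxiliary primes that feed the Jordan argument and force global transitivity. I would need to ensure these are mutually compatible and can be packaged into a single modulus $\beta\prod_{p\in\mathcal{S}}p$; the cleanest way is to absorb all auxiliary primes and the Jordan modulus into $\beta$ once and for all (so $\beta$ depends only on $f$, not on $\mathcal{S}$) and to keep the variable part supported on the distinct primes of $\mathcal{S}$, applying weak approximation to glue. The preliminary Tchebotarev condition that the primes be totally split in $\widehat k_g/\Qq$ is what guarantees $\Qq_p\widehat k_g=\Qq_p$ so that Corollary \ref{cor:fried} applies cleanly; this is already part of the hypothesis on $\mathcal{S}$, which is why (as noted in the remark following the statement) the constants here are worse than in the $G=\overline G=S_n$ case.
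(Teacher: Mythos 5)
Your overall strategy coincides with the paper's: apply Corollary \ref{cor:fried} in its G-version at a fixed set of auxiliary primes (one per non-trivial conjugacy class of $\overline G$) to force, via the Jordan-lemma argument and the regularity of $f$, that the specialization is a single degree $n$ extension; apply the mere version at the primes of ${\mathcal S}$ to prescribe the residue degrees; and glue all the congruences by weak approximation, with $\beta$ the product of the auxiliary primes and the total modulus $\beta\prod_{p\in{\mathcal S}}p$. This is exactly how the paper proceeds.

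There is, however, a concrete gap in your choice of constants. You set $m_0 = 4r^2(n!)^2$ and fix $\beta$ once and for all. But the auxiliary primes feeding the Jordan argument must themselves satisfy the hypotheses of Corollary \ref{cor:fried}: good, totally split in $\widehat k_g/\Qq$, and $\geq 4r^2(n!)^2 = m_0$. Since ${\mathcal S}$ is chosen \emph{after} $m_0$ and $\beta$, and may consist of arbitrary good totally split primes $> m_0$, nothing prevents ${\mathcal S}$ from containing one of your auxiliary primes $q$. At such a $q$ your proof needs two congruence conditions modulo $q$ simultaneously: the G-version residue class (forcing the Frobenius to be a prescribed non-trivial element of $\overline G$) and the mere-version residue class (forcing the prescribed type $d_{q,1}^1\cdots d_{q,s_q}^1$, which could for instance be the totally split type $1^n$). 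These are in general distinct residues mod $q$, so no integer $t_0$ satisfies both and the CRT gluing breaks down; dropping the Jordan condition at $q$ instead leaves a conjugacy class of $\overline G$ uncovered, and transitivity of the specialized group can then fail. The paper avoids this collision by making $m_0$ serve a second purpose, which is the actual content of its Addendum: $m_0$ is chosen large enough that the interval $[4r^2(n!)^2, m_0]$ already contains at least $\hbox{\rm cc}(\overline G)$ good primes totally split in $\widehat k_g/\Qq$, and the auxiliary primes (hence $\beta$) are taken from that interval. Every prime of ${\mathcal S}$, being $> m_0$, is then automatically distinct from every prime dividing $\beta$, so the two families of congruences live at disjoint primes and can always be glued. (Your inclusion of the bad primes of $f$ in $\beta$ is unnecessary --- no condition is imposed at those primes --- though harmless.)
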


\noindent
{\it Addendum} \ref{cor:effective} (on the constants) 
Denote 
the number of non-trivial conjugacy classes of $\overline G$ by $\hbox{{\rm cc}}(\overline G)$.
One can take $m_0$ such 
that the interval $[4r^2 (n!)^2,m_0]$ contains at least $\hbox{{\rm cc}}(\overline G)$ primes, good and totally split in $\widehat k_g/\Qq$, and $\beta$ to be the product of $\hbox{{\rm cc}}(\overline G)$ such 
primes. 
\medskip

\begin{proof} We use corollary \ref{cor:fried} simultaneaously for several primes: a first set of primes associated to all non-trivial elements of $\overline G$ as explained above, and the set of primes given
 in the statement with the associated types. We apply the G-version of corollary \ref{cor:fried} to the former data and the mere version to the latter. This provides an arithmetic progression $(am+b)_{m\in \Zz} \subset \Zz$ with ratio $a=\beta \prod_{p\in {\mathcal S}} p$ where $\beta >0$ is the product of the primes in the first set. The primes dividing $\beta$ guarantee that the collection of specializations at $am+b$ of  the $k$-cover $f$ is a single field extension $E/k$ of degree $n$. And each of the primes $p\in {\mathcal S}$ gives that the $\Qq_p$-\'etale algebra $E\otimes_k\Qq_p$ has degree divisor $d_{p,1}^1\cdots d_{p,s_p}^1$.\end{proof}

\subsection{Ample fields} \label{ssec:ample_fields}
Recall that a field $k$ is said to be {\it ample} if every smooth $k$-curve with a $k$-rational point has infinitely many $k$-rational points. Over an ample field, the twisting lemma \ref{prop:twisted cover} yields the following statement which generalizes \S 3.3.2 (***) from \cite{DeBB}.

\begin{corollary} \label{cor:ample}
Let $k$ be an ample field and $f:X\rightarrow B$ be a \hbox {degree $n$}  $k$-cover of curves. Let $t_0\in B(k)$ not in the branch point set {\bf t}. There exist infinitely many $t\in B(k)\setminus {\bf t}$ such that the collection of specializations $\prod_{l} k(X)_{t,l}/k$ and $\prod_{l} k(X)_{t_0,l}/k$ at $t$ and $t_0$ respectively are equal.
\end{corollary}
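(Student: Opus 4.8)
The plan is to apply the general twisting lemma (Lemma~\ref{prop:twisted cover}) to the given cover $f:X\rightarrow B$ together with the \'etale algebra $\prod_l k(X)_{t_0,l}/k$, which is the collection of specializations at the point $t_0$ we are handed. First I would set $\prod_l E_l/k = \prod_l k(X)_{t_0,l}/k$, so that $N/k$ is the compositum of the Galois closures of the $k(X)_{t_0,l}/k$ and $H=\Gal(N/k)$. Since this \'etale algebra is literally realized as a specialization of $f$ (at $t_0$), the necessary conditions in the setup of Lemma~\ref{prop:twisted cover} are automatically met: $H$ embeds in $G$, and condition (const/comp) holds because $\widehat k_g\subset k(X)_{t_0}$ forces the constant extension to be compatible with $N/k$ (this is exactly the situation produced by an actual specialization, so the compatibility is free rather than an extra hypothesis). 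Applying direction (i)$\Rightarrow$(ii) of Lemma~\ref{prop:twisted cover} at $t_0$, I obtain a subgroup $H'\subset G$, an isomorphism $\chi_\gamma\in{\rm Isom}_{\overline\chi}(H,H_\gamma)$ satisfying the group-theoretic condition (ii-2), and a point $x_0\in\widetilde Z^{\chi_\gamma\varphi}(k)$ lying above $t_0$.

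The key geometric input is then that $x_0$ is a $k$-rational point on the twisted cover $\widetilde Z^{\chi_\gamma\varphi}$, a curve. The plan is to replace $\widetilde Z^{\chi_\gamma\varphi}$ by a smooth model: since ampleness is stated for smooth $k$-curves, I would pass to the smooth locus (or a smooth projective model) of $\widetilde Z^{\chi_\gamma\varphi}$ and note that $x_0$, being an \emph{unramified} point above $t_0\notin\mathbf{t}$, is a smooth $k$-point. By the definition of ample field, a smooth $k$-curve with one $k$-rational point has infinitely many; hence $\widetilde Z^{\chi_\gamma\varphi}(k)$ is infinite. Discarding the finitely many points lying over the branch divisor $D$ and the finitely many ramification points of $\widetilde g^{\chi_\gamma\varphi}$, I still have infinitely many unramified $k$-points $x\in\widetilde Z^{\chi_\gamma\varphi}(k)$.

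Next I would push these points down via $\widetilde g^{\chi_\gamma\varphi}:\widetilde Z^{\chi_\gamma\varphi}\rightarrow B$ to get points $t=\widetilde g^{\chi_\gamma\varphi}(x)\in B(k)\setminus\mathbf{t}$. Since $\widetilde g^{\chi_\gamma\varphi}$ is a finite morphism, each fiber is finite, so infinitely many $k$-points $x$ produce infinitely many distinct images $t$. For each such $t$, condition (ii-1) of Lemma~\ref{prop:twisted cover} holds at $t$ with the \emph{same} $\chi_\gamma$, and condition (ii-2) is a purely group-theoretic statement about $\chi_\gamma$ that does not involve $t$, so it continues to hold. By the equivalence (ii)$\Rightarrow$(i) in Lemma~\ref{prop:twisted cover}, condition (i) holds at $t$: the collection of specializations $\prod_l k(X)_{t,l}/k$ equals the \'etale algebra $\prod_l E_l/k = \prod_l k(X)_{t_0,l}/k$. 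This yields infinitely many $t\in B(k)\setminus\mathbf{t}$ with the desired equality.

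The main obstacle I anticipate is the smoothness/ampleness bookkeeping rather than any deep new idea: one must ensure that an unramified $k$-point of $\widetilde Z^{\chi_\gamma\varphi}$ is genuinely a smooth point of a $k$-curve to which the ampleness hypothesis applies, and that passing to a smooth model does not lose the point $x_0$ or alter the specialization conclusion. A secondary technical point is verifying that the fiberwise finiteness of $\widetilde g^{\chi_\gamma\varphi}$ genuinely converts infinitely many source points into infinitely many target points (trivial once finiteness of the morphism is invoked, but worth stating). The crucial conceptual simplification is that condition (ii-2) is fixed once and for all by the single application at $t_0$, so the twisting data $\chi_\gamma$ transports uniformly across all the new points $t$, and only the diophantine condition (ii-1) needs to be re-established at each $t$ --- which is precisely what ampleness delivers.
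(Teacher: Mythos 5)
Your proposal is correct and takes essentially the same route as the paper's proof: apply the general twisting lemma with $\prod_l E_l/k = \prod_l k(X)_{t_0,l}/k$, use implication (i) $\Rightarrow$ (ii) at $t_0$ to produce a $k$-rational point on the twisted curve $\widetilde Z^{\chi_\gamma \varphi}$, invoke ampleness to get infinitely many $k$-points there, push them down by $\widetilde g^{\chi_\gamma \varphi}$ and conclude via (ii) $\Rightarrow$ (i), the group-theoretic condition (ii-2) being independent of $t$. The only difference is cosmetic: where you argue that (const/comp) is ``free'' because the constant extension sits inside the specialization, the paper verifies it more crisply by noting that $\varphi = \phi \circ {\sf s}_{t_0}$, hence $\overline{\varphi} = \Lambda$, so (const/comp) holds with $\overline{\chi} = {\rm Id}_{G/\overline{G}}$ and $\Gamma \neq \emptyset$ follows from remark \ref{rem:after_twisting_non-regular_galois} (a).
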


\begin{proof}
Take the $k$-\'etale algebra $\prod_{l=1}^s E_l/k$ from lemma \ref{prop:twisted cover} to be the  collection $\prod_{l=1}^s k(X)_{t_0,l}/k$ of specializations  at $t_0$. With the notation from \S \ref{ssec:Galois_covers}, we have $\varphi = \phi \circ {\sf s}_{t_0}$ and $\overline \varphi = \Lambda$.
Hence condition (const/comp) holds with $\overline \chi = {\rm Id}_{G/\overline G}$, and $\Gamma \not=\emptyset$ (remark \ref{rem:after_twisting_non-regular_galois} (a)). 
From implication (i) $\Rightarrow$ (ii) in lemma \ref{prop:twisted cover}, there exists $\gamma \in \Gamma$ such that conditions (ii-1) and (ii-2)
are satisfied for $t_0$ with $\chi=\chi_\gamma$. Condition (ii-1) is that there exists $x_0\in \widetilde Z^{\chi \varphi}(k)$  with $\widetilde g^{\chi \varphi}(x_0)=t_0$. As $k$ is ample and $\widetilde Z^{\chi \varphi}$ is a smooth $k$-curve, there are infinitely many $k$-rational points $x$ on $\widetilde Z^{\chi \varphi}$. The corresponding points $t= \widetilde g^{\chi \varphi}(x)\in B(k)$, excluding the branch points, satisfy conditions (ii-1) and (ii-2) from lemma \ref{prop:twisted cover}. Implication (ii) $\Rightarrow$ (i) of this lemma finishes the proof.
\end{proof}

\begin{remark} The proof and the result generalize to higher dimensional covers $f:X\rightarrow B$. It should be assumed however that the covering space $Z^\sep$ of the cover $Z^\sep\rightarrow B\otimes_kk^\sep$ corresponding to the field extension $k^\sep(Z)/k^\sep(B)$ is smooth ($Z^\sep$ is 
the normalization of $B$ in the field $k^\sep(Z)$ (defined in \S \ref{ssec:basic2})  and so is {\it a priori} only normal).
The ampleness of $k$ then provides a Zariski-dense subset of $k$-rational points on 
$\widetilde Z^{\chi \varphi}$ and the conclusion becomes that there exists a Zariski-dense subset 
${\mathcal B} \subset B(k)\setminus D$ such that the collection of specializations 
$\prod_{l} k(X)_{t,l}/k$ at each $t\in {\mathcal B}$ equals $\prod_{l} k(X)_{t_0,l}/k$.


\end{remark}

\bibliography{FCAmore2}

\begin{thebibliography}{DG11b}

\bibitem[BS09]{Bary-Soroker_Dirichlet}
Lior Bary-Soroker.
\newblock Dirichlet's theorem for polynomial rings.
\newblock {\em Proc. Amer. Math. Soc.}, 137:73--83, 2009.

\bibitem[BS10]{Bary-Soroker_irreducible}
Lior Bary-Soroker.
\newblock Irreducible values of polynomials.
\newblock {\em manuscript}, 2010.

\bibitem[Cho66]{chowla}
Sarvadavan Chowla.
\newblock A note on the construction of finite {G}alois fields $\rm{GF}(p^n)$.
\newblock {\em J. Math. Anal. Appl.}, 15:53--54, 1966.

\bibitem[Coh70]{cohen}
Stephan~D. Cohen.
\newblock The distribution of polynomials over finite fields.
\newblock {\em Acta Arith.}, 17:255--271, 1970.

\bibitem[DD97]{DeDo1}
Pierre D{\`e}bes and Jean-Claude Douai.
\newblock Algebraic covers: field of moduli versus field of definition.
\newblock {\em Annales Sci. E.N.S.}, 30:303--338, 1997.

\bibitem[D{\`e}b99]{DeBB}
Pierre D{\`e}bes.
\newblock Galois covers with prescribed fibers: the {B}eckmann-{B}lack problem.
\newblock {\em Ann. Scuola Norm. Sup. Pisa, {\rm Cl. Sci. (4)}}, 28:273--286,
  1999.

\bibitem[D{\`e}b09]{coursM2}
Pierre D{\`e}bes.
\newblock Arithm\'etique des rev\^etements de la droite.
\newblock 2009.
\newblock at http://math.univ-lille1.fr/\~{}pde/ens.html.

\bibitem[DG11a]{DEGha}
Pierre D{\`e}bes and Nour Ghazi.
\newblock Galois covers and the {H}ilbert-{G}runwald property.
\newblock {\em Ann. Inst. Fourier}, 61, 2011.

\bibitem[DG11b]{DEGha2}
Pierre D{\`e}bes and Nour Ghazi.
\newblock Specializations of {G}alois covers of the line.
\newblock In {\em Alexandru Myller Mathematical Seminar, Proceedings of the
  Centennial Conference}, volume 1329 of {\em American Institute of Physics},
  pages 98--108. V. Barbu and O. Carja, Eds, 2011.

\bibitem[DL11]{DeLe1}
Pierre D{\`e}bes and Fran{\c c}ois Legrand.
\newblock Specialization results in {G}alois theory.
\newblock {\em preprint}, 2011.

\bibitem[Eke90]{ekedahl}
Torsten Ekedahl.
\newblock An effective version of {H}ilbert's irreducibility theorem.
\newblock In {\em S\'eminaire de Th\'eorie des Nombres, Paris 1988/1989},
  volume~91 of {\em Progress in Mathematics}, pages 241--248. Birkh\"auser,
  1990.

\bibitem[FJ04]{FrJa}
Michael~D. Fried and Moshe Jarden.
\newblock {\em Field arithmetic}, volume~11 of {\em Ergebnisse der Mathematik
  und ihrer Grenzgebiete}.
\newblock Springer-Verlag, Berlin, 2004.
\newblock (first edition 1986).

\bibitem[Fri74]{fried-hilbert}
Michael~D. Fried.
\newblock On {H}ilbert's irreducibility theorem.
\newblock {\em J. Number Theory}, 6:211--231, 1974.

\bibitem[Ree71]{ree}
Rumhak Ree.
\newblock Proof of a conjecture of {S}. {C}howla.
\newblock {\em J. Number Theory}, 3:210--212, 1971.

\bibitem[Ser92]{Serre-topics}
Jean-Pierre Serre.
\newblock {\em Topics in Galois Theory}.
\newblock Research Notes in Mathematics. Jones and Bartlett Publishers, 1992.

\bibitem[Wei48]{Weil_hermann}
Andr\'e Weil.
\newblock {\em Sur les courbes alg\'ebriques et les vari\'et\'es alg\'ebriques
  qui s'en d\'eduisent}.
\newblock Hermann, Paris, 1948.

\end{thebibliography}
\bibliographystyle{alpha}

\end{document}